\documentclass[10pt]{amsart}

\usepackage{amsfonts,amssymb,amsmath,amscd,amstext}
\usepackage[colorlinks=true,linkcolor=blue,citecolor=blue]{hyperref}
\usepackage[utf8]{inputenc}
\usepackage{microtype}
\usepackage{graphicx}
\usepackage{changes}
\usepackage{comment}
\usepackage[a4paper,margin=3.5cm]{geometry}

\renewcommand{\leq}{\leqslant}

\renewcommand{\le}{\leqslant}
\renewcommand{\ge}{\geqslant}
\newcommand{\ptl}{\partial}
\newcommand{\hhh}{{\mathcal{H}}}
\newcommand{\norm}[1]{|| #1 ||}

\newcommand{\rr}{{\mathbb{R}}}

\newcommand{\hh}{{\mathbb{H}}}
\newcommand{\nn}{{\mathbb{N}}}
\newcommand{\sph}{{\mathbb{S}}}

\newcommand{\Om}{\Omega}
\newcommand{\eps}{\varepsilon}

\newcommand{\ga}{\gamma}
\newcommand{\Ga}{\Gamma}

\newcommand{\escpr}[1]{\langle#1\rangle}

\newcommand{\mh}{\mathcal{H}}

\DeclareMathOperator{\divv}{div}
\DeclareMathOperator{\intt}{int}
\DeclareMathOperator{\supp}{supp}

\newtheorem{theorem}{Theorem}[section]
\newtheorem{proposition}[theorem]{Proposition}
\newtheorem{lemma}[theorem]{Lemma}

\theoremstyle{definition}

\newtheorem{remark}[theorem]{Remark}

\newtheorem{definition}[theorem]{Definition} 

\theoremstyle{remark}

\numberwithin{equation}{section}

\definechangesauthor[name=Gianmarco, color=blue]{G}
\definechangesauthor[name=Manuel, color=purple]{M}

\begin{document}

\title[The sub-Finsler Bernstein problem in $\hh^1$]{The Bernstein problem for $(X,Y)$-Lipschitz surfaces in three-dimensional Sub-Finsler Heisenberg groups}

\author[G.~Giovannardi]{Gianmarco Giovannardi}
\address{Dipartimento di Matematica Informatica "U. Dini", Università degli Studi di Firenze, Viale Morgani 67/A, 50134, Firenze, Italy}
\email{gianmarco.giovannardi@unifi.it}

\author[M.~Ritoré]{Manuel Ritoré} 
\address{Departamento de Geometría y Topología \& Research Unit MNat \\
Universidad de Granada \\ Granada \\ Spain}
\email{ritore@ugr.es}

\date{\today}
\thanks{Both authors have been supported by MEC-Feder
	grants MTM2017-84851-C2-1-P and PID2020-118180GB-I00, Junta de Andalucía grants A-FQM-441-UGR18 and P20-00164 and H2020-MSCA-RISE-2017 project GHAIA . The first author has also been supported by INdAM–GNAMPA 2022 Project Analisi geometrica in strutture subriemanniane, codice CUP-E55F22000270001.}

\subjclass[2000]{53C17, 49Q10}
\keywords{Heisenberg group; area-stationary surfaces; sub-Finsler structure; stable surfaces; Bernstein Problem; sub-Finsler perimeter.}

\begin{abstract}
We prove that in the Heisenberg group $\hh^1$ with a sub-Finsler~structure, an $(X,Y)$-Lipschitz surface which is complete, oriented, connected and stable must be a vertical plane. In particular, the result holds for entire intrinsic graphs of Euclidean Lipschitz functions.
\end{abstract}

\maketitle

\thispagestyle{empty}

\section{Introduction}

Variational problems related to the sub-Riemannian perimeter introduced by Capogna, Danielli and Garofalo \cite{MR1312686} (see also Garofalo and Nhieu \cite{MR1404326} and Franchi, Serapioni and Serra Cassano \cite{MR1871966}) have received great interest recently, specially in the Heisenberg groups $\hh^n$. In particular, Bernstein type problems, either for stable intrinsic graphs or for stable surfaces without singular points, have been specially considered. We refer the reader to the introduction in \cite{MR3406514} for an account of recent results, including \cite{MR2165405,MR2435652,MR2405158,MR2333095,MR2648078,MR2609016,MR3044134,MR2875642,MR3259763,MR2448649,MR2481053,MR2262784,MR3984100,MR2583494}. The monograph \cite{MR2312336} provides a quite complete survey of progress on the subject.

In the last years, a left-invariant sub-Finsler perimeter has been considered on the Heisenberg groups, see \cite{snchez2017subfinsler,2020arXiv200704683P,monti-finsler,MR4314055,pozuelo-nilpotent}. A quite natural question is whether Bernstein type results similar to the sub-Riemannian ones hold for the sub-Finsler perimeter.

The main result in this paper is Theorem~\ref{thm:bernstein}, where we prove that in the Heisenberg group $\hh^1$ with a sub-Finsler structure, a complete, oriented, connected and stable $(X,Y)$-Lipschitz surface is a vertical plane. Roughly speaking an $(X,Y)$-Lipschitz surface is locally the intrinsic graph of a Euclidean Lipschitz function. 
Theorem~\ref{thm:bernstein} is a generalization of the corresponding sub-Riemannian result for graphs obtained by Nicolussi and Serra-Cassano in \cite{MR3984100}. Recently, R. Young \cite{MR4433085} proved that a \emph{ruled} area-minimizing entire intrinsic graph in $\mathbb{H}^1$ is a vertical plane by introducing a family of deformations of graphical strips based on variations of a vertical curve.

A sub-Finsler structure is obtained from a left-invariant asymmetric norm $\norm{\cdot}$ in the horizontal distribution of $\hh^1$. Such a norm can be obtained from a convex set $K$ contained in the horizontal plane at the origin in $\hh^1$. The associated $K$-perimeter is defined by
\[
|\ptl E|_K(V)=\sup\bigg\{\int_E\divv(U)\,d\hh^1: U\in\hhh_0^1(V), \norm{U}_{K,\infty}\le 1\bigg\}<+\infty,
\]
where $\hhh_0^1(V)$ is the space of horizontal vector fields of class $C^1$ with compact support in the open set $V$, and $\norm{U}_{K,\infty}=\sup_{p\in V} \norm{U_p}_K$. The integral is computed with respect to the Riemannian measure $d\hh^1$ of a fixed left-invariant Riemannian metric $g$ in $\hh^1$, while the divergence is the one associated to this Riemannian metric. When $K=D$, the closed unit disk centered at the origin of $\rr^2$, the $K$-perimeter coincides with the classical sub-Riemannian perimeter.

The first variation formula of the sub-Finsler perimeter was computed in \cite[\S~3]{2020arXiv200704683P} for surfaces of class $C^2$ without singular points under the hypothesis that $K$ is a convex set of class $C^2_+$. This means that $\ptl K$ is of class $C^2$ and has strictly positive sectional curvature. The following formula was obtained
\begin{equation}
\label{eq:1stintro}
\tag{*}
\dfrac{d}{ds}\Big|_{s=0} A_K( \varphi_s(S))= \int_S u H_K\,dS,
\end{equation}
where $\{\varphi_s\}_{s\in\rr}$ is the flow (i.e., the one-parameter group of diffeomorphisms) associated to a vector field $U$ with compact support in the regular part of $S$, the function $u$ is equal to the normal component $\escpr{U,N}$ of the variation ($N$ is a unit normal for the Riemannian metric $g$), and $dS$ is the Riemannian area element. The function $H_K$ appearing in \eqref{eq:1stintro} is the $K$-mean curvature
\[
H_K=\escpr{D_Z\pi_K(\nu_h),Z},
\]
where $Z$ is a unit horizontal vector field in $S$,  $\nu_h$ is the horizontal unit normal obtained by rotating $Z$ by ninety degrees, and $\pi_K$ is the inverse of the normal map of $\ptl K$. Hence  formula \eqref{eq:1stintro} has sense whenever the horizontal curves in $S$ are of class $C^2$. However, the computations in \cite{2020arXiv200704683P} require to take one derivative of the normal to the surface and so they are not valid for surfaces with less regularity. 

In \cite{MR4314055}, also under the assumption that $K\in C^2_+$, the authors proved that a Euclidean Lipschitz and $\hh$-regular surface with prescribed continuous mean curvature has horizontal (characteristic) curves of class $C^2$, extending the corresponding sub-Riemannian result in \cite{MR3474402}. Hence the $K$-mean curvature can be computed along the characteristic curves in this type of surfaces. Our main task in Section~\ref{sec:first} is to compute the first variation for $(X,Y)$-Lipschitz  surfaces and to check that the first variation formula \eqref{eq:1stintro} also holds for these surfaces with lower regularity. Of course the proof is different from the one in \cite{2020arXiv200704683P} and makes use of a Jacobian of horizontal type introduced by Galli in his Ph.D. Thesis \cite{galli-thesis}, see also \cite{MR3044134}. In particular, for area-stationary surfaces we get $H_K=0$ on $S$. Following the arguments in \cite{MR3474402, MR4314055} we prove that an area-stationary surface $S$ is foliated by horizontal straight lines  and following \cite{MR3984100}   we show that $S$ is $\hh$-regular.

In Section~\ref{sec:codazzi} we show that for an area-stationary surface $S$ the function $y=\escpr{N,T}/|N_h|$ satisfies the differential equation
\[
y''-6y'y+4y^3=0
\]
along almost every horizontal curve in $S$. Here $N$ is a Riemannian unit normal to $S$, $N_h$ the orthogonal projection to the horizontal distribution and $T$ the Reeb vector field on $\hh^1$. The function $D=1/y$ was proven to satisfy the equivalent equation 
\[
DD''=2(D'+1)(D'+2)
\]
for $C^1$ surfaces by Cheng, Hwang, Malchiodi and Yang \cite{MR2983199}. 

Both equations play an important role in the study of the singular set for $C^1$ surfaces. Moreover, the regularity of $\escpr{N,T}/|N_h|$ along the horizontal (characteristic) curves in $S$ is crucial to compute the second variation formula. The function $\escpr{N,T}/|N_h|$ appears frequently in the sub-Riemannian theory of hypersurfaces in the Heisenberg groups $\hh^n$. For instance, it is the curvature of a length-minimizing geodesic realizing the distance between a hypersurface to a given point \cite{MR4193432}.

In Section~\ref{sec:second} we compute the second variation formula of the area for \textit{horizontal} vector fields with compact support. The second variation formula, which is formally similar to the one obtained in the sub-Riemannian case, is given by
\begin{equation*}
\dfrac{d^2}{ds^2}\Big|_{s=0} A_K( \varphi_s(S)) =\int_S \big( Z(u)^2+ q u^2 \big)\, \dfrac{|N_h|}{\kappa(\pi_K({\nu_h}))} dS,
\end{equation*}
where $\{\varphi_s\}_{s\in\rr}$ is the flow associated to a horizontal vector field $U$ with compact support, $u=\escpr{U,N}$ is the normal component and $q$ is the function defined by
\[
\frac{q}{4}=  Z\bigg( \dfrac{\escpr{N,T}}{|N_h|}\bigg)-  \dfrac{\escpr{N,T}^2}{|N_h|^2}.
\]
The function $\kappa$ is the geodesic curvature of $\ptl K$. In the sub-Riemannnian case, where $K$ is the unit disc, we have $\kappa=1$. In our case, the vector $\nu_h$ is constant along horizontal lines in $S$, so that $\kappa(\pi_K(\nu_h))$ is constant on horizontal lines. The computation of this second variation follows the lines of \cite{MR3406514}, where the second variation of the sub-Riemannian area was computed for stable $C^1$ surfaces to solve the Bernstein problem. There is a slight difference in the definition of the function $q$ with respect to \cite{MR3406514} that is related to the choice of $Z$ as $J(\nu_h)$ or $-J(\nu_h)$. We also use some ideas from Nicolussi and Serra-Cassano \cite{MR3984100}, who proved Bernstein's Theorem in the sub-Riemannian setting when $S$ is the intrinsic graph of a Euclidean Lipschitz function.

Finally, in Section~\ref{sec:bernstein} we prove in our main result, Theorem~\ref{thm:bernstein}, that a complete, oriented, connected and stable $(X,Y)$-Lipschitz surface is a vertical plane.  We emphasize that Nicolussi and Serra-Cassano showed that this result is optimal in the sub-Riemannian setting, exhibiting  two counterexamples when the Euclidean Lipschitz regularity assumption is missing, see Theorems~7.1 and 8.1 in \cite{MR3984100}.

\subsubsection*{Acknowledgement}
We warmly thank Francesco Serra Cassano for his advice and for stimulating discussions.

\section{Preliminaries}
\label{sc:preliminaries}
\subsection{The Heisenberg group}
\label{sc:heis}
We denote by $\mathbb{H}^1$ the \emph{first Heisenberg group}, defined as the $3$-dimensional Euclidean space $\rr^3$ with the product
\[
(x,y,t)*(x',y',t')=(x+x', y+y',t+t'+ x'y-xy').
\]
A basis of left invariant vector fields is given by 
\[
X=\dfrac{\partial}{\partial x} + y \dfrac{\partial}{\partial t}, \qquad Y=\dfrac{\partial}{\partial y} - x \dfrac{\partial}{\partial t}, \qquad T=\dfrac{\partial}{\partial t}.
\]
For $p \in \hh^1$, the \emph{left translation by} $p$ is the diffeomorphism $L_p(q) =p*q$.
The \emph{horizontal distribution} $\mathcal{H}$ is the planar distribution generated by $X$ and $Y$, which coincides with the kernel of the contact one-form $\omega=dt-y dx+x dy$. The distribution $\mathcal{H}$ is completely nonintegrable.

We shall consider on $\mathbb{H}^1$ the auxiliary left-invariant Riemannian metric $g= \escpr{\cdot,\cdot}$, so that $\{X, Y, T\}$ is an orthonormal basis at every point. Let $D$ be the Levi-Civita connection associated to the Riemannian metric $g$. 
The following relations can be easily computed 
\begin{equation}
\label{eq:LeviCitiva}
\begin{aligned}
&D_X X=0, \quad & D_Y Y=0, \qquad \, & D_T T=0\\
&D_X Y=-T, \quad & D_X T=Y, \qquad & D_Y T=-X\\
&D_Y X=T, \quad & D_T X=Y, \qquad & D_T Y=-X.\\
\end{aligned}
\end{equation}
Setting $J(U)=D_U T$ for any vector field $U$ in $\hh^1$ we get $J(X)=Y$, $J(Y)=-X$ and $J(T)=0$. Therefore $-J^2$ coincides with the identity when restricted to the horizontal distribution.
 The Riemannian volume of a set $E$ is, up to a constant, the Haar measure of the group and is denoted by $|E|$. The integral of a function $f$ with respect to the Riemannian measure is denoted by $\int f\, d\hh^1$.

\subsection{The pseudo-hermitian connection} The pseudo-hermitian connection  $\nabla$ is the only affine connection satisfying the following properties:
\begin{enumerate}
\item $\nabla$ is a metric connection, and
\item $\text{Tor}(U,V)=2 \escpr{J(U),V} T$ for all vector fields $U,V\in\mathfrak{X}(\hh^1)$.
\end{enumerate}
We recall that a metric connection must satisfy
\[
U(g(V,W))=g(\nabla_U V,W)+g(V,\nabla_U W) 
\]
for vector fields $U,V,W\in\mathfrak{X}(\hh^1)$. The torsion tensor associated to $\nabla$ is defined by
\[
\text{Tor}(U,V) =\nabla_U V - \nabla_V U - [U,V]
\]
for all $U,V\in\mathfrak{X}(\hh^1)$. From this definition and Koszul formula, see formula (9) in the proof of Theorem~3.6 in \cite{MR1138207}, it follows easily that $\nabla X=\nabla Y=0$ and $\nabla J=0$. For a general discussion about  the pseudo-hermitian connection see for instance \cite[§ 1.2]{MR2214654}. Given a curve $\gamma: I \to \hh^1$ we denote by ${\nabla}/{ds}$ the covariant derivative induced by the pseudo-hermitian connection along $\gamma$.

\subsection{Sub-Finsler norms}
Given a convex set $K\subset\rr^2$ with $0\in\intt(K)$ and~associated asymmetric norm $\norm{\cdot}$ in $\rr^2$, we define a left-invariant norm $\norm{\cdot}_K$ on the horizontal distribution of $\hh^1$ by means of the equality
\[
(\norm{fX+gY}_K)(p)=\norm{(f(p),g(p))},
\] 
for any $p\in\hh^1$. The dual norm is denoted by $\norm{\cdot}_{K,*}$. 

If  the boundary of $K$ is of class $C^\ell$, $\ell\ge 2$, and the geodesic curvature of $\ptl K$ is strictly positive, we say that $K$ is of class $C^\ell_+$. When $K$ is of class $C^2_+$, the outer Gauss map $N_K$ is a diffeomorphism from $\ptl K$ to $\sph^1$ and the map
\[
\pi_K(fX+gY)=N_K^{-1}\bigg(\frac{(f,g)}{\sqrt{f^2+g^2}}\bigg),
\]
defined for nowhere vanishing horizontal vector fields $U=fX+gY$, satisfies
\[
\norm{U}_{K,*}=\escpr{U,\pi_K(U)}.
\]
See \S~2.3 in \cite{2020arXiv200704683P}.

\subsection{Sub-Finsler perimeter}
\label{sc:subper}
Here we summarize some of the results contained in subsection 2.4 in \cite{2020arXiv200704683P}.

Given a  compact convex set $K\subset\rr^2$ with $0\in\intt(K)$, the norm $\norm{\cdot}_K$ defines a perimeter functional: given a measurable set $E\subset\hh^1$ and  an open subset $\Om\subset\hh^1$, we say that $E$ has locally finite $K$-perimeter in $\Om$ if for any relatively compact open set $V\subset\Om$ we have
\[
|\ptl E|_K(V)=\sup\bigg\{\int_E\divv(U)\,d\hh^1: U\in\hhh_0^1(V), \norm{U}_{K,\infty}\le 1\bigg\}<+\infty,
\]
where $\hhh_0^1(V)$ is the space of horizontal vector fields of class $C^1$ with compact support in $V$, and $\norm{U}_{K,\infty}=\sup_{p\in V} \norm{U_p}_K$. The integral is computed with respect to the Riemannian measure $d\hh^1$ of the left-invariant Riemannian metric $g$, and the divergence is the one associated to $g$. When $K=D$, the closed unit disk centered at the origin of $\rr^2$, the $K$-perimeter coincides with classical sub-Riemannian perimeter. 

If $K,K'$ are bounded convex bodies containing $0$ in its interior then there exist~constants $\alpha,\beta>0$ such that
\[
\alpha \norm{x}_{K'}\le \norm{x}_K\le \beta \norm{x}_{K'},\quad \text{for all }x\in\rr^2,
\]
and it is not difficult to prove that
\begin{equation*}
\beta^{-1} |\ptl E|_{K'}(V)\leq |\ptl E|_K(V)\leq \alpha^{-1} |\ptl E|_{K'}(V).
\end{equation*}
Then $E$ has locally finite $K$-perimeter if and only if it has locally finite $K'$-perimeter. In particular, any set with locally finite $K$-perimeter has locally finite sub-Riemannian perimeter.

Riesz Representation Theorem implies the existence of a $|\ptl E|_K$-measurable vector field $\nu_K$ so that for any horizontal vector field $U$ with compact support of class $C^1$ we have
\[
\int_\Om\divv(U)\,d\hh^1=\int_\Om\escpr{U,\nu_K}\,d|\ptl E|_K.
\]
In addition, $\nu_K$ satisfies $|\ptl E|_K$-a.e. the equality $\norm{\nu_K}_{K,*}=1$, where $\norm{\cdot}_{K,*}$ is the dual norm of $\norm{\cdot}_K$.  

Given two convex sets $K,K'\subset \rr^2$ containing $0$ in their interiors, we have the following representation formula for the sub-Finsler perimeter measure $|\partial E|_K$ and the vector field $\nu_K$
\begin{equation*}
|\partial E|_K=\norm{\nu_{K'}}_{K,*}|\partial E|_{K'},\quad \nu_{K}=\frac{\nu_{K'}}{\norm{\nu_{K'}}_{K,*}}.
\end{equation*}
Indeed, for the closed unit disk $D\subset\rr^2$ centered at $0$ we know that in the Euclidean Lipschitz case $\nu_D=\nu_h$ and $|N_h|=\norm{N_h}_{D,*}$ where $N$ is the \emph{outer} unit normal. Hence we have
\begin{equation*}
|\partial E|_K=\norm{\nu_h}_{K,*}d|\partial E|_D, \quad \nu_K=\frac{\nu_h}{\norm{\nu_h}_{K,*}}.
\end{equation*}
Here $|\partial E|_D$ is the standard sub-Riemannian measure. Moreover, $\nu_h=N_h/|N_h|$ and $|N_h|^{-1}d|\ptl E|_D=dS$, where $dS$ is the standard Riemannian measure on $S$. Hence we get, for a set $E$ with Euclidean Lipschitz boundary $S$
\begin{equation}
\label{eq:AKlipschitz}
|\ptl E|_K(\Om)=\int_{S\cap\Om}\norm{N_h}_{K,*}\,dS,
\end{equation}
where $dS$ is the Riemannian measure on $S$, obtained from the area formula using a local Lipschitz parameterization of $S$, see Proposition~2.14 in \cite{MR1871966}. It coincides with the $2$-dimensional Hausdorff measure associated to the Riemannian distance induced by $g$. We stress that here $N$ is the \emph{outer} unit normal. This choice is important because of the lack of symmetry of $\norm{\cdot}_K$ and $\norm{\cdot}_{K,*}$. 
Moreover when $S=\ptl E\cap\Omega$ is a Euclidean Lipschitz surface the $K$-perimeter coincides with the area functional 
\[
A_K(S)=\int_{S}\norm{N_h}_{K,*}\,dS.
\]

\subsection{Surfaces in $\hh^1$}
\label{sc:surfaceinH}

Following \cite{MR2223801,MR1871966} we provide the following definition.

\begin{definition}[$\mathbb{H}$-regular surface]
A real  continuous function $f$ defined on an open set $\Omega\subset \mathbb{H}^1$  is of class $C_{\mathbb{H}}^1(\Omega)$ if the distributional derivative $\nabla_{\mathbb{H}} f= (X f, Y f)$ is represented by a continuous vector field on $\Om$.

We say that $S \subset \mathbb{H}^1$ is an $\mathbb{H}$-regular surface if for each $p \in \mathbb{H}^1$ there exist an open set $U$ containing $p$ and a function $f \in C_{\mathbb{H}}^1(U)$ such that $\nabla_{\mathbb{H}} f \ne 0$ on $U$ and $S \cap U=\{f=0\}$.
Under such conditions, the horizontal unit normal $\nu_h$ on $S\cap U$ is defined as the restriction of the non-vanishing continuous vector field
\[
\dfrac{\nabla_{\mathbb{H}} f}{|\nabla_{\mathbb{H}} f|},
\]
defined on all of $U$.
\end{definition}

Given a vertical plane $P\subset\hh^1$, and a function $u$ defined on a domain $D\subset P$, we denote by $\text{Gr}(u)$ the \textit{intrinsic graph} of $u$, defined as the Riemannian normal graph of the function $u$. Since the Riemannian unit normal to $P$ is the restriction of a unitary left-invariant vector field $X_P$,  the intrinsic graph of $u$ is given by
\[
\text{Gr}(u)=\big\{\exp_p\big(u(p)\,X_P(p)\big): p\in D\big\}.
\]
where $\exp$ is the exponential map on the Riemannian manifold $(\hh^1,g)$. Using Euclidean rotations about the vertical axis $x=y=0$, that are isometries of the Riemannian metric $g$, we may assume that $P$ is the plane $\{y=0\}$. Since in this case $X_P=Y$, the intrinsic graph $\text{Gr}(u)$ can be parameterized by the map
\[
\Phi^u (x,t)=(x, u(x,t), t- x u(x,t)),
\]
for $(x,0,t) \in D$. Notice that $\Phi^u (x,t)=(x,0,t)*(0,u(x,t),0)$, where $*$ is the Heisenberg product defined in \ref{sc:heis}. For further details, we refer the reader to \cite{MR2313532}. Note also that $u$ measures the signed distance of $\Phi^u(x,t)$ to the plane $P$, see \cite{MR4193432}.

Given the intrinsic graph $\text{Gr}(u)$ of a Euclidean Lipschitz function defined on some domain $D$ of the vertical plane $P$, we know by Rademacher's Theorem that $u$ is $\mathcal{H}^2$-a.e. differentiable on $D$, where $\mathcal{H}^2$ is the $2$-dimensional Euclidean Hausdorff measure on $D$. Assuming $P=\{y=0\}$, and given a differentiability point $(x_0,0,t_0)$ of $u$, the tangent plane of $\text{Gr}(u)$ is well defined at $\Phi^u(x_0,t_0)$ and so it is the normal vector field $N$. Hence $N$ is defined $\mathcal{H}^2$-a.e. on $\text{Gr}(u)$. Moreover,
\begin{equation}
\label{eq:unitnormal}
 N=\frac{(u_x+2uu_t)X-Y+u_tT}{\sqrt{1+u_t^2+(u_x+2uu_t)^2}},   
\end{equation}
see the computations in \S~4 in \cite{MR3412382}. Hence $N$ is never vertical. At differentiability points of $\text{Gr}(u)$ we define
\[
\nu_h=\frac{N_h}{|N_h|}=\frac{(u_x+2uu_t)X-Y}{\sqrt{1+(u_x+2uu_t)^2}},
\]
and the vector field $Z$ by
\[
Z=-J(\nu_h),
\]
which is tangent to $S$ and horizontal. An orthonormal basis at the tangent space of $\text{Gr}(u)$ at the differentiable point is obtained by adding to $Z$ the vector
\begin{equation}
\label{eq:defE}
E=\escpr{N,T} \nu_h- |N_h| T.
\end{equation}

Following  \cite{MR3060706} we provide the following definition.
\begin{definition}
\label{def:Vittone}
A set $S \subset \hh^1$ is a $(X,Y)$-Lipschitz surface if for each $p \in S$ there exist an open neighborhood $U_p\subset \hh^1$ of $p$, and a Lipschitz function $f:U \to \rr$ such that
\[
S\cap U=\{f=0\}
\]
and 
\[ 
X f\ge l \quad \text{a.e. on }\, U\qquad \text{or} \qquad Yf\ge l \quad \text{a.e. on }\, U\
\]
for a suitable $l>0$.
\end{definition}

We stress that  following result was previously obtained by \cite[Theorem 3.2]{MR3060706}, using the notion of homogeneous cone. Here we provide a different proof. 
\begin{theorem}
\label{th:iftLip}
A set $S\subset \hh^1$ is a $(X,Y)$-Lipschitz surface if and only if $S$ is locally  the intrinsic graph of a Euclidean Lipschitz function.
\end{theorem}
\begin{proof}
Assume that $S$ is a $(X,Y)$-Lipschitz surface. Given $p \in S$ there exist an open ball $B_r(p)\subset\hh^1$ and a Euclidean Lipschitz function $f$ defined on $B_r(p)$ such that 
\[
S\cap B_r(p)=\big\{(x,y,t) : f(x,y,t)=0\big\}.
\]
Since $S$ is $(X,Y)$-Lipschitz, after a rotation about the vertical axis we may assume the existence of $l>0$ such that $Yf(q) \ge l>0$ for every point of differentiability of $f$ close enough to $p$. In particular the convex hull of 
\[
\bigg\{\lim_{i \to\infty }Yf (q_i)   :    \lim_{i\to \infty} q_i = p, q_i \text{ differentiability point of }f \bigg\}
\]
does not contain $0$. Let us consider the $C^{\infty}$ diffeomorphism $H(x,y,t)=(x,y,t-xy)$ on $\hh^1$. Then the function $f \circ H$ is Lipschitz and 
\[
\dfrac{\partial (f \circ H)}{\partial y} (q)= \left(\dfrac{\partial f}{\partial y}- x \dfrac{\partial f}{\partial t}\right) (q)= Yf(q)
\]
for each point $q$ of differentiability of $f$. Therefore by the Implicit function Theorem for Lipschitz functions \cite[p.~255]{MR1019086} there exists an open neighborhood $D\subset \{y=0\}$ of the projection of $p$ on $\{y=0\}$ and a Euclidean Lipschitz function $u: D \to  \rr$ such that  $f(x,u(x,t),t-xu(x,t))=0$. In other words, the surface $S$ is locally an intrinsic graph of a Lipschitz function over the vertical plane $\{y=0\}$.

Assume now that $S$ is locally the intrinsic graph of a Euclidean Lipschitz function $u$. Let $p$ in $S$ and assume that  $S\cap B_r(p)=\Phi^u(D)$ where  $\Phi^u(x,t)=(x,u(x,t),t-xu(x,t))$  and  $u: D \to \rr$ is a Euclidean Lipschitz function.
Setting
 \[
 f(x,y,t)=y-u(x,t+yx),
 \]
we clearly have that $f$ is a Euclidean Lipschitz function defined in an open neighborhood of $p$. Eventually reducing the radius $r>0$ we get that $S\cap B_r(p)=\{f=0\}$ and $Y(f)=1>0$ a.e. on $B_r(p)$. Therefore $S$ is a $(X,Y)$-Lipschitz surface.
\end{proof}

\begin{remark}
Notice that
\begin{enumerate}
    \item An $(X,Y)$-Lipschitz surface is an embedded surface by Theorem \ref{th:iftLip}.
    \item If a Euclidean Lipschitz function $f$ defined on an open domain of $\hh^1$ is $C_\hh^1$ then their level sets are $(X,Y)$-Lipschitz. Indeed, since the horizontal gradient $\nabla_{\hh} f=(Xf,Yf)$ is a never vanishing \emph{continuous} vector field, we obtain that locally $X f\ge l>0$ or $Y(f)\ge l>0$ eventually replacing $f$ by $-f$. 
\end{enumerate}
\end{remark}

\begin{definition}
Let $S \subset \hh^1$ be a $C^1$ surface. We say that $p\in S$ belongs to the \textit{singular set} $S_0$  of $S$ if the tangent space $T_pS$ coincides with the horizontal distribution $\mh_p$.
\end{definition}

The following result, whose proof can be found in \cite{MR3044134}, will be used to compute the first and second variation of a surface.

\begin{proposition}
\label{pr:divt}
Let $S$ be an oriented immersed $C^2$ surface in $\hh^1$ with singular set $S_0=\emptyset$ and let $f\in C^1(S)$. Then
\[
\divv_S( f  Z)=Z(f)- (\escpr{N,T} \theta(E)+2\escpr{N,T} |N_h|)f 
\]
and 
\[
\divv_S( f E)=E(f) + \escpr{N,T} \theta(Z) f ,
\]
where we have set  $\theta(W)=\escpr{\nabla_W \nu_h, Z}$ for each vector field $W$.
\end{proposition}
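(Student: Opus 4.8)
The plan is to compute the surface divergence of the tangential vector fields $fZ$ and $fE$ directly in the orthonormal frame $\{Z, E\}$ of the tangent bundle of $S\smallsetminus S_0$, using the definition
\[
\divv_S(W)=\escpr{\nabla_Z W, Z}+\escpr{\nabla_E W, E}
\]
for the pseudo-hermitian connection $\nabla$. Since $Z, E$ are unit and orthogonal, and since $N$ is orthogonal to both, I would first expand each covariant derivative via the Leibniz rule, writing $\nabla_Z(fZ)=Z(f)Z+f\nabla_Z Z$ and $\nabla_E(fZ)=E(f)Z+f\nabla_E Z$. Taking inner products with $Z$ and $E$ respectively, the $Z(f)$ term survives in the first and the $E(f)\escpr{Z,E}=0$ term drops in the second, so that $\divv_S(fZ)=Z(f)+f\big(\escpr{\nabla_Z Z,Z}+\escpr{\nabla_E Z,E}\big)$ and symmetrically $\divv_S(fE)=E(f)+f\big(\escpr{\nabla_Z E,Z}+\escpr{\nabla_E E,E}\big)$. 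Because $\nabla$ is metric and $Z, E$ are unit, $\escpr{\nabla_Z Z,Z}=\escpr{\nabla_E E,E}=0$, leaving two coefficient identities to establish:
\[
\escpr{\nabla_E Z,E}=-\big(\escpr{N,T}\theta(E)+2\escpr{N,T}|N_h|\big),\qquad
\escpr{\nabla_Z E,Z}=\escpr{N,T}\theta(Z).
\]

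The heart of the computation is expanding these coefficients using the structural data of the frame. Here I would substitute $Z=-J(\nu_h)$ and $E=\escpr{N,T}\nu_h-|N_h|T$, and exploit the facts recorded in the preliminaries: $\nabla X=\nabla Y=0$ and $\nabla J=0$, so that $\nabla_W\nu_h$ can be handled through the orthonormal decomposition $\nu_h=\escpr{\nu_h,\cdot}$ in the horizontal frame, while $T$ must be differentiated separately since it is not parallel for $\nabla$ (its torsion contribution is governed by $\mathrm{Tor}(U,V)=2\escpr{J(U),V}T$). The orthonormality relations $\escpr{\nu_h,Z}=0$, $\escpr{\nu_h,T}=0$ (as $\nu_h$ is horizontal), together with the definition $\theta(W)=\escpr{\nabla_W\nu_h,Z}$, let me convert $\escpr{\nabla_W E,\cdot}$ and $\escpr{\nabla_W Z,\cdot}$ into expressions in $\theta(W)$ and the coefficients $\escpr{N,T}$, $|N_h|$. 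For instance, differentiating $E$ and using $\escpr{\nabla_W\nu_h,\nu_h}=0$ and $\nabla_W T=J(W)$ (up to the correct sign from the torsion), the $T$-derivative produces precisely the extra $2\escpr{N,T}|N_h|$ term in the first formula, which is the tell-tale sign of the non-trivial pseudo-hermitian torsion.

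The main obstacle I anticipate is bookkeeping the torsion correctly: the extra summand $2\escpr{N,T}|N_h|$ in $\divv_S(fZ)$ does not arise from the metric-connection identities alone but from how $\nabla$ acts on $T$ and from the interplay $\nabla_W T = J(W)^{\perp}$-type contributions, so I must carefully track every appearance of $T$ and of $J$ acting on horizontal fields, keeping the sign conventions $Z=-J(\nu_h)$ consistent throughout. A secondary subtlety is that this is stated for $C^2$ surfaces, so all pointwise identities are classical and I need not worry about the almost-everywhere issues relevant to the Lipschitz setting; nevertheless I would record which identities are purely algebraic (frame orthonormality, $\nabla J=0$) versus which require differentiating the varying frame along $S$, since only the latter generate the curvature/torsion terms. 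Once the two coefficient identities are verified, substituting them back into the Leibniz expansions yields the stated formulas immediately.
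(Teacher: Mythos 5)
The paper does not actually prove this proposition; it cites Galli \cite{MR3044134}. Your frame-trace strategy is the natural one and is essentially what that reference does, but as written your argument contains a genuine error in the choice of connection, and it reaches the stated formula only through a second, compensating error. The surface divergence that enters the divergence theorem (and the one the paper uses --- see the identity $\divv_S(Z)=\escpr{D_EZ,E}$ in the proof of Lemma~\ref{lm:C^2intbypart}) is the trace of the tangential part of the \emph{Levi-Civita} covariant derivative $D$, not of the pseudo-hermitian connection $\nabla$. These traces genuinely differ here: since $\nabla X=\nabla Y=\nabla T=0$, computing with $\nabla$ gives $\escpr{\nabla_EZ,E}=-\escpr{N,T}\theta(E)$, so you would obtain $\divv_S(fZ)=Z(f)-\escpr{N,T}\theta(E)f$ and miss the term $-2\escpr{N,T}|N_h|f$. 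Relatedly, your claim that $T$ is not parallel for $\nabla$ and that ``$\nabla_WT=J(W)$ up to sign'' is false: the paper itself records that $T$ is $\nabla$-parallel (proof of Proposition~\ref{prop:fvf}), and $J(U)=D_UT$ is by definition a Levi-Civita identity. The extra term you expect to extract from differentiating $T$ therefore comes from an identity for $D$ smuggled into a computation you set up for $\nabla$.

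The fix is to run your Leibniz expansion consistently with $D$. Writing $\nu_h=aX+bY$ and $Z=bX-aY$, the structure equations for $D$ give $D_EZ=E(b)X-E(a)Y+\escpr{N,T}T-|N_h|\nu_h$, and pairing with $E=\escpr{N,T}\nu_h-|N_h|T$ yields $\escpr{D_EZ,E}=\escpr{N,T}\bigl(aE(b)-bE(a)\bigr)-2\escpr{N,T}|N_h|=-\escpr{N,T}\theta(E)-2\escpr{N,T}|N_h|$, where $\theta(E)=\escpr{\nabla_E\nu_h,Z}=bE(a)-aE(b)$ is still the pseudo-hermitian quantity of the statement. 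For the second identity the discrepancy between the connections disappears: $D_ZT=J(Z)=\nu_h$ and $D_Z\nu_h-\nabla_Z\nu_h=-T$ are both orthogonal to $Z$, so $\escpr{D_ZE,Z}=\escpr{N,T}\theta(Z)$ with either connection. With these two coefficient identities in place, your expansion gives the proposition.
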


\section{The first variation formula}
\label{sec:first}

We start this section computing the first variation formula for Lipschitz surfaces which are twice differentiable in the horizontal directions. We start by proving some technical lemmas.

\begin{lemma}
\label{lm:cdlb}
Let $U$ be a smooth vector field on $\hh^1$ and $\{\varphi_s\}_{s\in\rr}$ be the flow associated to $U$. Let $p \in \hh^1$ and $e \in T_p \hh^1$. Define the smooth curve $\beta(s)=\varphi_s(p)$ and the smooth vector field $E(s)=(d\varphi_s)_p(e)$ along $\beta$. Then we have 
\begin{equation}
\label{eq:1stnabla}
\dfrac{\nabla}{ds}\Big|_{s=0} E(s)= \nabla_e U+2\escpr{J(U_p),e}\,T_p.
\end{equation}
\begin{proof}
Let us rename the standard coordinates $(x,y,t)$ as $(x_1,x_2,x_3)$. Let $\varphi_s=(\varphi_1,\varphi_2,\varphi_3)$, and $e=(e_1,e_2,e_3)$, and  $U=\sum_{i=1}^3 f_i \tfrac{\partial}{\partial{x}_i}$. Then
\[
(d \varphi_s)_p=\begin{pmatrix}
\frac{\partial \varphi_1}{\partial x_1} & \frac{\partial \varphi_1}{\partial x_2}& \frac{\partial \varphi_1}{\partial x_3} \\
\frac{\partial \varphi_2}{\partial x_1} & \frac{\partial \varphi_2}{\partial x_2}& \frac{\partial \varphi_2}{\partial x_3}   \\
\frac{\partial \varphi_3}{\partial x_1} & \frac{\partial \varphi_3}{\partial x_2}& \frac{\partial \varphi_3}{\partial x_3} 
\end{pmatrix},
\]
and 
\[
E(s)=(d \varphi_s)_p (e)= \sum_{i=1}^3 \sum_{j=1}^3 e_j  \dfrac{\partial \varphi_i}{\partial x_j}\left(\dfrac{\partial }{\partial x_i}\right)_{\beta(s)}= \sum_{i=1}^3 g_i(s) \left(\dfrac{\partial }{\partial x_i}\right)_{\beta(s)},
\]
where $g_i(s)= \sum_{j=1}^3 e_j  \tfrac{\partial \varphi_i}{\partial x_j}$. Therefore
\[
\dfrac{\nabla}{ds}\Big|_{s=0} E(s)= \sum_{i=1}^3 g_i'(0) \left(\dfrac{\partial }{\partial x_i}\right)_p+ \sum_{i=1}^3 g_i(0)  \nabla_{U_p} \dfrac{\partial }{\partial x_i}.
\]
Since $g_i(0)=e_i$ and $g_i'(0)=e(f_i)$ we have 
\[
\dfrac{\nabla}{ds}\Big|_{s=0} E(s)=\sum_{i=1}^3 e(f_i) \left(\dfrac{\partial }{\partial x_i}\right)_p+ e_i  \nabla_{U_p} \dfrac{\partial }{\partial x_i}.
\]
On the other hand
\[
\nabla_e U= \sum_{i=1}^3 e(f_i) \left(\dfrac{\partial }{\partial x_i}\right)_p+ f_i \nabla_{e} \dfrac{\partial }{\partial x_i}
\]
Since 
\begin{align*}
\nabla_{\tfrac{\partial}{\partial{x}}} \tfrac{\partial}{\partial y}&=\nabla_{\tfrac{\partial}{\partial{y}}} \tfrac{\partial}{\partial x}+ \text{Tor}(\tfrac{\partial}{\partial{x}}, \tfrac{\partial}{\partial{y}})= \nabla_{\tfrac{\partial}{\partial{y}}} \tfrac{\partial}{\partial x}+ 2\,\escpr{J(\tfrac{\partial}{\partial{x}}), \tfrac{\partial}{\partial{y}}}\,T
\\
&= \nabla_{\tfrac{\partial}{\partial{y}}} \tfrac{\partial}{\partial x}+ 2\,\escpr{J(X)-y J(T), Y}\,T+ 2x \,\escpr{J(X), T}\,T
\\
&=\nabla_{\tfrac{\partial}{\partial{y}}} \tfrac{\partial}{\partial x}+ 2T,
\end{align*}
and
$$\nabla_{\tfrac{\partial}{\partial{x_i}}} \tfrac{\partial}{\partial x_j}=\nabla_{\tfrac{\partial}{\partial{x_j}}} \tfrac{\partial}{\partial x_i},$$
for $\{i,j\} \ne \{1,2\}$ there follows, evaluating at $p$,
\begin{equation*}
\begin{aligned}
\sum_{i=1}^3 e_i \nabla_{U_p} \tfrac{\partial }{\partial x_i}&=\sum_{i,j=1}^3  e_i \, f_j\, \nabla_{\tfrac{\partial}{\partial{x_j}}} \tfrac{\partial}{\partial x_i}=e_1 f_2  \nabla_{\tfrac{\partial}{\partial{y}}} \tfrac{\partial}{\partial x} +e_2 f_1 \nabla_{\tfrac{\partial}{\partial{x}}} \tfrac{\partial}{\partial y}  
\\
&\hspace{0.3\textwidth}+\sum_{i,j=1, \{i,j\}\ne\{1,2\}}^3 e_i \, f_j\,\nabla_{\tfrac{\partial}{\partial{x_j}}} \tfrac{\partial}{\partial x_i} \\
&=e_1 f_2 (\nabla_{\tfrac{\partial}{\partial{x}}} \tfrac{\partial}{\partial y}-2T) +e_2 f_1 (\nabla_{\tfrac{\partial}{\partial{y}}} \tfrac{\partial}{\partial x}+ 2T)
\\
&\hspace{0.3\textwidth}+\sum_{i,j=1, \{i,j\}\ne\{1,2\}}^3 e_i \, f_j\, \nabla_{\tfrac{\partial}{\partial{x_i}}} \tfrac{\partial}{\partial x_j} \\
&=2(e_2 f_1- e_1 f_2)\,T+ \sum_{j=1}^3  \, f_j \, \nabla_{e}  \tfrac{\partial}{\partial x_j} 
\\
&=2(e_2 f_1- e_1 f_2)\,T+ \sum_{i=1}^3  \, f_i \,\nabla_{e}  \tfrac{\partial}{\partial x_i}.
\end{aligned}
\end{equation*}
Hence we obtain \eqref{eq:1stnabla}. 
\end{proof}
\end{lemma}

Let $S\subset\hh^1$ be a $(X,Y)$-Lipschitz surface and $p\in S$ a point of differentiability of $S$. Consider a $C^1$ vector field $U$ with compact support on $\hh^1$ and let $\{\varphi_t\}_{t\in\rr}$ be the associated flow. A basis of the tangent space to  $\varphi_s(S)$ at $\varphi_s(p)$ is given~by $E_1(s)=(d\varphi_s)_p(Z_p)$ and $E_2(s)=(d\varphi_s)_p(E_p)$, where $E$ is defined in \eqref{eq:defE}. Let $N_s$ be the unit normal to $\varphi_s(S)$. Then 
\[
[(N_s)_h]_{\varphi_s(p)}=\dfrac{[E_1(s) \times E_2(s)]_h}{|E_1(s) \times E_2(s)|}.
\]
Observe that $\text{Jac}(\varphi_s)(p)=|E_1(s) \times E_2(s)|$.
Setting $V(s,p)=[E_1(s) \times E_2(s)]_h$ we have 
\begin{equation}
    \label{eq:defV}
V(s,p)=\escpr{E_1,T}\, T \times E_2 + \escpr{E_2,T}\, E_1 \times T.
\end{equation}
Since $\{Z, \nu_h,T\}$ is positively oriented, we have $V(0,\cdot)=|N_h|\, \nu_h$.
The area functional $A_K$ of $\varphi_s(S)$ is then given by
\begin{equation}
\label{eq:areaK-V}
\begin{split}
A_K( \varphi_s(S))&= \int_S \escpr{\pi_K([E_1(s) \times E_2(s)]_h), \dfrac{[E_1(s) \times E_2(s)]_h}{|E_1(s) \times E_2(s)|}} |E_1(s) \times E_2(s)|\,d S\\
&=\int_S \escpr{\pi_K(V(s,p)), V(s,p)}\,dS(p)
\\
&=\int_S ||V(s,p)||_{*}\,dS(p).
\end{split}
\end{equation}

\begin{lemma}
\label{lm:novanishing}
Let $S$ be an $(X,Y)$-Lipschitz surface, and $\{\varphi_s\}_{s\in\rr}$ the flow associated to a compactly supported $C^1$ vector field $U$ in $\hh^1$. Let $L=\textit{supp}(U) \cap S$. Then there exist positive constants $l, s_0$ such that  $|V(s,p)|\ge  \frac{l}{2}$ for a.e. $p\in L$ and any $s \in (-s_0,s_0)$. 
\end{lemma}
\begin{proof}
Theorem \ref{th:iftLip} implies that, after a rotation about the vertical axis, there exists $r>0$ and a Euclidean Lipschitz function $u:D \to \rr$, defined on an open set $D$ of the vertical plane $y=0$, such that $S \cap B_r(p)=\Phi^u(D)$. Let $B\subset D$ be the set of  points of differentiability of $u$. Since $u$ is Euclidean Lipschitz, equation \eqref{eq:unitnormal} implies
\[
|N_h|\ge \dfrac{1}{\sqrt{1+(u_t)^2+(u_x+2uu_t)^2}} \ge l_{B_r(p)} >0
\]
for some constant $l_{B_r(p)}$ depending on a Lipschitz constant of $u$ on $B_r(p)$. Therefore by the compactness of $\text{supp}(U)$, there exists $l>0$ such that $|N_h|\ge l >0$ for a.e. $q \in L$.

Assume by contradiction the existence of a mesurable set $A \subset L$ such that $\mathcal{H}^2(A)>0$ and of a sequence $\{s_j\}_{j\in\nn}$ converging to $0$ such that $|V(s_j,q)|< \frac{l}{2}$ for each $q \in A$.  Then
\begin{equation}
    \label{eq:lemma-3.2-1}
\lim_{j \to \infty}\int_A |V(s_j,q)|\, dS(q)\le \frac{l}{2}\,\mathcal{H}^2(A).
\end{equation}
On the other hand, since $|E_1(s_j)|=|d\varphi_{s_j}(Z)| \le C' $, $|E_2(s_j)|=|d\varphi_{s_j}(E)| \le C'$ for some $C'>0$, we obtain from \eqref{eq:defV} the existence of $C>0$ such that $|V(s_j)|\le C$ on $A$. By the continuity of $V(s,q)$ in $s$ we have $\lim_{j\to \infty} |V(s_j)|= |N_h|$ a.e. in $A$. By dominated convergence
\[
\lim_{j \to \infty}\int_A |V(s_j,q)|\, dS(q)= \int_A |N_h|\, dS \ge l\, \mathcal{H}^2(A).
\]
Therefore, since $\mathcal{H}^2(A)>0$ we get a contradiction to \eqref{eq:lemma-3.2-1}
\end{proof}

Now we compute the first variation of the sub-Finsler area.

\begin{proposition}
\label{prop:fvf}
Let $K\in C^2_+$ be a convex body with $0 \in \intt(K)$, and $S \subset \hh^1$  an oriented $(X,Y)$-Lipschitz surface. Then the first variation of the sub-Finsler area induced by a $C^1$ vector field U with compact support in $\hh^1$, with $\partial S \cap \text{supp}(U)= \emptyset$, is given by
\begin{equation}
\label{eq:first0}
\begin{aligned}
\dfrac{d}{ds}\Big|_{s=0} A_K( \varphi_s(S))= \int_S\bigg[&-\escpr{N,T} Z(\escpr{U,T}) \pi_Z -E(\escpr{U,T}) \pi_{\nu}\\
&-  2\escpr{N,T} \escpr{J(U),\pi(\nu_h)} - |N_h| \escpr{J(\pi(\nu_h)), \nabla_Z U}\bigg] dS,
\end{aligned}
\end{equation}
where $\{\varphi_s\}_{s\in\rr}$ is the flow associated to $U$.

Moreover, if we assume that the derivative in the $Z$-direction of $\nu_h$ and $Z$ exists and is continuous, then we have
\begin{equation}
\label{eq:first}
\dfrac{d}{ds}\Big|_{s=0} A_K( \varphi_s(S))= \int_S u\, \escpr{\nabla_Z \pi_K(\nu_h),Z}dS,
\end{equation}
where $u=\escpr{N,U}$.
\end{proposition}
\begin{proof}
We denote $V(s,\cdot)$ by $V(s)$ for simplicity. Let us prove first that 
\begin{equation}
    \label{eq:first-1}
\dfrac{d}{ds}\Big|_{s=0} A_K( \varphi_s(S))=\int_S \dfrac{d}{ds}\Big|_{s=0} ||V(s)||_{*}\, dS.
\end{equation}
This means we can differentiate under the integral sign.

By Lemma \ref{lm:novanishing} the norm of $V(s)$ is strictly positive a.e. in $\supp(U) \cap S$. So we have 
\begin{equation}
\label{eq:fvdomination}
 \begin{aligned}
\dfrac{d}{ds} \norm{V(s)}_{*}&= \escpr{\pi_K(V(s)), \dfrac{\nabla}{ds} V(s) } \le \norm{\pi_K(V(s))}_K \norm{\dfrac{\nabla}{ds} V(s)}_{K,*}\\
&=\norm{\dfrac{\nabla}{ds} V(s)}_{*}\le \beta\, |\dfrac{\nabla}{ds} V(s)|,
\end{aligned} 
\end{equation}
for a.e. in $\supp U \cap S$, where $\beta$ is the positive constant defined in \S~\ref{sc:subper} taking $K'$ equal to the Euclidean ball centered at zero.
By Lemma \ref{lm:cdlb} there holds 
\begin{align*}
\dfrac{\nabla}{ds} V(s)=&\escpr{\nabla_{E_1(s)} U,T} T \times E_2(s)+ 2\escpr{J(U),E_1(s)}\, T \times E_2(s)+\escpr{E_1(s),T}T \times \nabla_{E_2(s)} U\\
& + 2\escpr{J(U),E_2(s)}\,E_1(s) \times T +\escpr{\nabla_{E_2(s)} U,T} E_1(s) \times T+ \escpr{E_2(s),T}\nabla_{E_1(s)} U \times T .
\end{align*}
Since $|E_1(s)|=|d\varphi_s(Z)| \le C$ and $|E_2(s)|=|d\varphi_s(E)| \le C$ for $s \in (-s_0,s_0)$ where $C>0$ is independent of $s$. Then, writing the covariant derivative $\nabla_{E_i(s)} U$ in standard coordinates, we obtain
\[
\big|\dfrac{\nabla}{ds} V(s)\big| \le \tilde{C} \,\|U\|_{C^1}
\]
a.e. in $\supp U \cap S$ for a suitable constant $\tilde{C}>0$. Here $\|U\|_{C^1}$ denotes the standard $C^1$ norm of $U$. Since $\supp U \cap S$ is compact, dominated convergence implies \eqref{eq:first-1}.

Let us compute now
\[
\dfrac{d}{ds}\Big|_{s=0} || V(s)||_*=\dfrac{d}{ds}\Big|_{s=0} \escpr{\pi_K(V(s)), V(s)}
\]
at a point $p$ of differentiability of $S$. By Remark 3.3 in \cite{2020arXiv200704683P},
\begin{align*}
\dfrac{d}{ds}\Big|_{s=0} \escpr{\pi_K(V(s)), V(s)}&= \escpr{\pi_K(V(0)), \dfrac{\nabla}{ds} \Big|_{s=0} V(s)}
= \escpr{\pi_K((\nu_h)_p), \dfrac{\nabla}{ds} \Big|_{s=0} V(s)}.
\end{align*}
Since $T$ is parallel with respect to the pseudo-hermitian connection $\nabla$ and $\escpr{Z,T}=0 $, we have 
\begin{align*}
  \dfrac{\nabla}{ds} \Big|_{s=0} V(s)=\escpr{\dfrac{\nabla}{ds} \Big|_{s=0} E_1 (s),T} \,T \times S &+ \escpr{\dfrac{\nabla}{ds} \Big|_{s=0} E_2(s),T}\, Z \times T\\
  & +\escpr{E,T} \dfrac{\nabla}{ds} \Big|_{s=0}\,  E_1(s) \times T.
\end{align*}
Lemma \ref{lm:cdlb} implies
\[
\dfrac{\nabla}{ds} \Big|_{s=0} E_1(s)=\nabla_{Z} U+ 2\escpr{J(U),Z} \,T
\]
and 
\[
\dfrac{\nabla}{ds} \Big|_{s=0} E_2(s)= \nabla_{E} U+ 2\escpr{J(U),E} \,T.
\]
Therefore, evaluating at $p$ but omitting it for clarity,
\begin{equation}
\label{eq:nUV}
\begin{split}
\frac{\nabla}{ds} \Big|_{s=0} V(s)&=\big(\escpr{\nabla_{Z} U,T}+ 2\escpr{J(U),Z}\big)\,  T \times E  \\
&\qquad+\big(\escpr{\nabla_{E} U,T}+ 2\escpr{J(U),E}\big)\,  Z \times T
+\escpr{E,T}\, \nabla_{Z}\,  U \times T\\
&= \escpr{N,T} \big(Z(\escpr{U,T}) + 2\escpr{J(U),Z}\big)\, T \times \nu_h\\
&\qquad+\big(E(\escpr{U,T})+ 2\escpr{J(U),E}\big)\, Z\times T-|N_h|\,  \nabla_{Z}  U \times T.
\end{split}
\end{equation}
We set $\pi_K(\nu_h)=\pi_Z Z+ \pi_{\nu} \nu_h$, where $\pi_Z=\escpr{\pi(\nu_h), Z}$ and $\pi_{\nu}=\escpr{\pi(\nu_h), \nu_h}$.  Notice that $T\times \nu_h=-Z$, $Z\times T=-\nu_h$ and $\escpr{\pi(\nu_h),\nabla_Z U \times T}=\escpr{J(\pi(\nu_h)), \nabla_Z U}$, then we obtain 
\begin{equation}
\label{eq:fvf1}
\begin{split}
\escpr{\pi_K(\nu_h), \dfrac{\nabla}{ds} \Big|_{s=0} V(s)}&= -\escpr{N,T} (Z(\escpr{U,T}) + 2\escpr{J(U),Z}) \pi_{Z}\\
&\qquad-(E(\escpr{U,T})+ 2\escpr{J(U),E}) \pi_{\nu} \\
&\qquad-|N_h| \escpr{J(\pi_K(\nu_h)), \nabla_Z U}\\
&=-\escpr{N,T} Z(\escpr{U,T}) \pi_Z - E(\escpr{U,T}) \pi_{\nu} \\
&\qquad-2\escpr{N,T} \escpr{J(U),\pi_K(\nu_h)}\\
&\qquad-|N_h| \escpr{J(\pi_K(\nu_h)), \nabla_Z U}.
\end{split}
\end{equation}
This implies \eqref{eq:first0}.

Let us finally prove \eqref{eq:first}. We show in Lemma~\ref{lm:C^2intbypart} that \eqref{eq:first} holds for $C^2$ surfaces. The general result follows by approximation. Following Proposition 1.20 in \cite{MR657581} or  Remark 6.1 in \cite{MR3044134}  we approximate the $(X,Y)$-Lipschitz surface $S=\{p \in \hh^1  :  f(p)=0\}$ by a family of smooth surfaces  $S_j=\{p \in \hh^1  :  f_j(p)=0\}$, where $f_j=\rho_j * f$ and  $\rho_j$ are the standard Friedrichs’ mollifiers. Since $S$ is $(X,Y)$-Lipschitz we gain that $(S_j)_0= \emptyset$. Hence $S_j$ converges to $S$ on compact subsets in $S$. Given $j \ge 1$, let  $Z^j$ be the characteristic vector field of the regular part of $S_j$ and  $\nu_h^j$ be the horizontal unit norm to $S_j$. Then we have 
\[
Z= \lim_{j\to \infty}Z^j, \quad \nu_h= \lim_{j\to \infty} \nu_h^j, \quad \lim_{j\to\infty}\nabla_{Z^j}\pi_K(\nu_h^j)=\nabla_Z\pi_K(\nu).
\]
 Since by assumption all the terms are continuous we have that the convergence is uniformly  on each compact set of $S$.
On the other hand, since $E$ and $N$ are only $L^{\infty}$ (thus in particular $L^1_{\text{loc}}$), we have that  $E^j$ and $N^j$  converge to $E$ and $N$ a.e. in $S$. Applying Lemma \ref{lm:C^2intbypart}  to the smooth surface $S_j$  and Lebesgue's dominated convergence theorem we obtain from \eqref{eq:fvf1}
\begin{align*}
\int_S \escpr{\pi(\nu_h), \dfrac{\nabla}{ds} \Big|_{s=0} V(s)} dS&=\int_S \bigg[-\escpr{N,T} Z(\escpr{U,T}) \pi_Z -E(\escpr{U,T}) \pi_{\nu}\\
& \qquad-  2\escpr{N,T} \escpr{J(U),\pi(\nu_h)} - |N_h| \escpr{J(\pi(\nu_h)), \nabla_Z U}\bigg] dS\\
&=\lim_{j\to \infty} \int_{S_j}\bigg[ -\escpr{N^j,T} Z^j (\escpr{U,T}) \pi_{Z^j} -E^j(\escpr{U,T}) \pi_{\nu^j}\\
& \qquad -  2\escpr{N^j,T} \escpr{J(U),\pi(\nu_h^j)} dS_j -  |N_h^j| \escpr{J(\pi(\nu_h^j)), \nabla_{Z^j} U} \bigg] dS_j \\
&=\lim_{j\to \infty} \int_{S^j}  u \escpr{\nabla_{Z^j} \pi(\nu_h^j),Z^j} dS_j\\
&= \int_{S} u \escpr{\nabla_Z \pi(\nu_h),Z} dS,
\end{align*}
since $\pi_K$ is $C^1$. This concludes the proof.
\end{proof}

\begin{lemma}
\label{lm:C^2intbypart}
Let $S \subset \hh^1$ be an oriented $C^2$ surface with $S_0=\emptyset$. Let $U$ be a $C^1$ vector field with compact support and normal component $u=\escpr{U, N}\in C_0^1(S)$. Then we have 
\begin{equation}
\label{eq:fvflemma}
\begin{split}
\int_{S} u \escpr{\nabla_Z \pi(\nu_h),Z} dS&=\int_S \bigg[ -\escpr{N,T} Z(\escpr{U,T}) \pi_Z -E(\escpr{U,T}) \pi_{\nu} \\
&-  2\escpr{N,T} \escpr{J(U),\pi(\nu_h)}
-  |N_h| \escpr{J(\pi(\nu_h)), \nabla_Z U} \bigg]dS,
\end{split}
\end{equation} 
 where $\pi_Z=\escpr{\pi_K,Z}$ and $\pi_{\nu}=\escpr{\pi_K,\nu_h}$.
\end{lemma}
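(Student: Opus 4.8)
The statement is an integration-by-parts identity on the smooth surface $S$, so the plan is to start from the right-hand side — which is precisely the integral of the pointwise expression \eqref{eq:fvf1} for $\escpr{\pi(\nu_h),\nabla_U V}$ — and transfer every derivative falling on $U$ onto the geometric coefficients until only the density $u\escpr{\nabla_Z\pi(\nu_h),Z}$ survives. Since $U$ is compactly supported in $S\smallsetminus S_0$, the surface divergence theorem gives $\int_S\divv_S(fZ)\,dS=\int_S\divv_S(fE)\,dS=0$ for every compactly supported $f\in C^1(S)$, and Proposition~\ref{pr:divt} then yields the two elementary integration-by-parts rules
\begin{equation*}
\int_S Z(f)\,dS=\int_S\big(\escpr{N,T}\theta(E)+2\escpr{N,T}|N_h|\big)f\,dS,\qquad \int_S E(f)\,dS=-\int_S\escpr{N,T}\theta(Z)\,f\,dS,
\end{equation*}
which are the only analytic inputs needed.

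Before integrating I would record the pointwise algebra in the moving frame $\{Z,\nu_h,T\}$. Since $\nabla T=0$, $\nabla J=0$ and $|\nu_h|\equiv 1$, one gets $\nabla_W\nu_h=\theta(W)\,Z$ and $\nabla_W Z=-\theta(W)\,\nu_h$ for every $W$; writing $\pi(\nu_h)=\pi_Z Z+\pi_\nu\nu_h$ this gives $\nabla_Z\pi(\nu_h)=(Z(\pi_Z)+\pi_\nu\theta(Z))Z+(Z(\pi_\nu)-\pi_Z\theta(Z))\nu_h$, so the target density is $u\,(Z(\pi_Z)+\pi_\nu\theta(Z))$ with $u=|N_h|\escpr{\nu_h,U}+\escpr{N,T}\escpr{U,T}$. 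I would also expand $J(U)=\escpr{U,Z}\nu_h-\escpr{U,\nu_h}Z$ and $J(\pi(\nu_h))=\pi_Z\nu_h-\pi_\nu Z$, so that the last term of \eqref{eq:fvf1} becomes $-|N_h|\big(\pi_Z\escpr{\nu_h,\nabla_Z U}-\pi_\nu\escpr{Z,\nabla_Z U}\big)$ and, by metric compatibility, $\escpr{\nu_h,\nabla_Z U}=Z\escpr{\nu_h,U}-\theta(Z)\escpr{Z,U}$ and $\escpr{Z,\nabla_Z U}=Z\escpr{Z,U}+\theta(Z)\escpr{\nu_h,U}$.

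With these reductions every derivative of $U$ has been converted into $Z(\escpr{U,T})$, $E(\escpr{U,T})$, $Z\escpr{\nu_h,U}$ and $Z\escpr{Z,U}$, each of which I would integrate by parts using the two rules above. The $Z(\escpr{U,T})$ and $E(\escpr{U,T})$ terms produce, among other contributions, the term $\escpr{N,T}\escpr{U,T}Z(\pi_Z)$, matching part of $u\,Z(\pi_Z)$, while integrating $Z\escpr{\nu_h,U}$ by parts produces $|N_h|\escpr{\nu_h,U}Z(\pi_Z)$, supplying the remaining part.

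The step I expect to be the real obstacle is the final bookkeeping: collecting the many leftover terms and showing that everything not proportional to $u\,(Z(\pi_Z)+\pi_\nu\theta(Z))$ cancels. Concretely, all terms carrying the tangential component $\escpr{Z,U}$ must disappear, and the torsion coefficient $2\escpr{N,T}|N_h|$ coming from the $Z$-divergence rule together with the explicit torsion term $-2\escpr{N,T}\escpr{J(U),\pi(\nu_h)}$ of \eqref{eq:fvf1} must combine with the $\theta(Z)$- and $\theta(E)$-contributions so as to reconstruct exactly $u\,\pi_\nu\theta(Z)$. This cancellation is delicate because it mixes the curvature coefficients $\escpr{N,T}\theta(E)$, $\escpr{N,T}\theta(Z)$ from Proposition~\ref{pr:divt} with the derivatives $Z(\escpr{N,T})$, $Z(\pi_Z)$ and the sign conventions inherent in $Z=-J(\nu_h)$; it is here that the structural identities $\nabla_W\nu_h=\theta(W)Z$ and the constraint $|N_h|^2+\escpr{N,T}^2=1$ do the decisive work.
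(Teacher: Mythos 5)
Your setup is sound and your toolkit is exactly the paper's: the two integration-by-parts rules you derive from Proposition~\ref{pr:divt} via the divergence theorem, the frame identities $\nabla_W\nu_h=\theta(W)Z$ and $\nabla_W Z=-\theta(W)\nu_h$, and the expansions of $J(U)$, $J(\pi(\nu_h))$ and $\nabla_Z\pi(\nu_h)$ in the frame $\{Z,\nu_h,T\}$ are all correct and are what the paper uses. The difference is organizational but consequential. The paper splits $U$ into its horizontal part $U_h=\escpr{U,Z}Z+\escpr{U,\nu_h}\nu_h$ and vertical part $U_v=\escpr{U,T}T$, computes the two contributions $I_h$ and $I_v$ separately, and then, before adding them, \emph{restricts to a normal field} $U=uN$, so that $\escpr{U,Z}=0$, $\escpr{U,\nu_h}=u|N_h|$ and $\escpr{U,T}=u\escpr{N,T}$; the purely tangential contribution is dispatched by the remark that tangential variations do not change the first variation. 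This reduction is what makes the final sum collapse in two lines via $|N_h|Z(|N_h|)+\escpr{N,T}Z(\escpr{N,T})=0$.

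You instead keep $U$ general and must show that every term carrying the tangential component $\escpr{Z,U}$ cancels. You correctly identify this as ``the real obstacle,'' but you do not carry it out, and that cancellation \emph{is} the content of the lemma: the $\escpr{Z,U}$-terms coming from $-\pi_\nu\escpr{Z,\nabla_ZU}$, from the torsion term $-2\escpr{N,T}\escpr{J(U),\pi(\nu_h)}$, and from the coefficients $\escpr{N,T}\theta(E)+2\escpr{N,T}|N_h|$ produced by the $Z$-integration by parts must all conspire, and until that is verified the identity is not proved. I would recommend adopting the paper's shortcut: establish the identity for $U=uN$ by your method, where the bookkeeping closes cleanly, and handle tangential $U$ by the separate observation that both sides vanish --- the left side because $u=\escpr{N,U}=0$, and the right side because it is the integrand of the first variation and the flow of a compactly supported tangent field preserves $S$.
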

\begin{proof}
Since both sides of \eqref{eq:fvflemma} depend linearly on $U$, we consider the horizontal $U_h=\escpr{U,Z}\, Z+ \escpr{U,\nu_h}\, \nu_h$ and vertical $U_v=\escpr{U,T}\, T$ components of $U$.

We start considering the horizontal component $U_h$ and we denote by $I_h$ the corresponding right hand side of \eqref{eq:fvflemma}. Thus we obtain
\begin{align*}
I_h&=\int_S \bigg\{-2\escpr{N,T} \escpr{J(U_h),\pi(\nu_h)}
-|N_h| \escpr{J(\pi(\nu_h)), \nabla_Z U_h}\bigg\} dS\\
&=\int_S -2\escpr{N,T} \escpr{J(U_h),\pi(\nu_h)}\,dS
\\
& \hspace{4em}
+\int_S  \bigg\{|N_h| Z\escpr{\pi(\nu_h), J(U_h)}
+ |N_h|\escpr{ \nabla_Z \pi(\nu_h), J(U_h)}\bigg\}\,dS,
\end{align*}
since $\nabla_W J(V)=J(\nabla_W V)$ (as $\nabla_W J=0$) for each pair of vector fields $W,V$.
By Proposition \ref{pr:divt} we have
\begin{align*}
\int_S |N_h| Z(\escpr{\pi(\nu_h), J(U_h)}) dS&=-\int_S \escpr{\pi(\nu_h), J(U_h)} Z(|N_h|) dS\\
&-\int_S |N_h|\escpr{\pi(\nu_h), J(U_h)} \divv_S(Z) dS,
\end{align*}
where  $\divv_S(Z)=\escpr{D_E Z,E}=-\escpr{N,T} \theta(E)-2\escpr{N,T}|N_h|$.
Moreover, we have 
\[
\escpr{ \nabla_Z \pi(\nu_h), J(U_h)}=-\escpr{U,\nu_h}\escpr{ \nabla_Z \pi(\nu_h), Z},
\]
since $ \escpr{ \nabla_Z \pi(\nu_h), \nu_h}=0$, by Remark 3.3 in \cite{2020arXiv200704683P}. Hence we have
\begin{equation}
\label{eq:fvfhf}
\begin{split}
I_h&= \escpr{J(U_h),\pi(\nu_h)}(-2\escpr{N,T}^3 -Z(|N_h|)  + |N_h|\escpr{N,T} \theta(E) )\\
& \hspace{4em}+ |N_h| \escpr{U,\nu_h}\escpr{ \nabla_Z \pi(\nu_h), Z}.
\end{split}
\end{equation}

Now we consider the vertical component of the variational vector field $U_v=\escpr{U,T}T$. We denote by $I_v$ the left hand side of  \eqref{eq:fvflemma}. Thus
\begin{equation}
\label{eq:fvft}
\begin{aligned}
I_v=\int_S -\escpr{N,T} \pi_Z  Z(\escpr{U,T})  - E(\escpr{U,T}) \pi_{\nu} \, dS\\
\end{aligned}
\end{equation}
By Proposition \ref{pr:divt} we get 
\begin{align*}
-\int_S \escpr{N,T} \pi_Z  Z(\escpr{U,T}) dS &=\int_S (Z(\escpr{N,T}) \pi_Z+ \escpr{N,T} Z(\pi_Z))  \escpr{U,T} dS\\
 &- \int_S \escpr{U,T} \escpr{N,T} \pi_Z (\escpr{N,T} \theta(E))+2\escpr{N,T} |N_h|) dS\\
 &=\int_S \escpr{U,T} Z(\escpr{N,T}) \pi_Z dS\\
 &+ \int_S \escpr{N,T} \escpr{U,T} (\escpr{\nabla_Z \pi(\nu_h),Z}- \pi_{\nu} \theta(Z))  dS\\
 &- \int_S \escpr{U,T} \escpr{N,T} \pi_Z (\escpr{N,T} \theta(E))+2\escpr{N,T} |N_h|) dS,
\end{align*}
and 
\begin{align*}
-\int_S  \pi_{\nu}  E(\escpr{U,T}) dS &=\int_S E(\pi_{\nu})  \escpr{U,T} dS+ \int_S \escpr{U,T}  \escpr{N,T }\pi_{\nu}  \theta(Z) dS\\
 &=\int_S \pi_Z \theta(E)  \escpr{U,T} dS+ \int_S \escpr{U,T}  \escpr{N,T }\pi_{\nu}  \theta(Z) dS
 \end{align*}
Adding the previous terms we obtain 
\begin{equation}
\label{eq:fvft2}
\begin{split}
I_v
&=\int_S \escpr{U,T} Z(\escpr{N,T}) \pi_Z dS+ \int_S \escpr{N,T} \escpr{U,T} (\escpr{\nabla_Z \pi(\nu_h),Z} )  dS \\
 &- \int_S 2 \escpr{U,T} \pi_Z \escpr{N,T}^2  |N_h|) dS + \int_S \pi_Z |N_h|^2 \theta(E)  \escpr{U,T} dS\\
\end{split}
\end{equation}
Since tangential variations do not change the first variation formula,  we consider a normal variational vector field $U=u N$ with $u\in C_0^1(S)$ so that $\escpr{U,Z}=0$, $\escpr{U,\nu_h}=u |N_h|$ and $\escpr{U,T}=u \escpr{N,T}$. Then adding the integral of the  horizontal term \eqref{eq:fvfhf} and the vertical term \eqref{eq:fvft2} we obtain
\begin{equation}
\label{eq:fvff}
\begin{aligned}
I_h+I_v&=\int_S -u \, \pi_Z |N_h| (-2\escpr{N,T}^3 -Z(|N_h|)  + |N_h|\escpr{N,T} \theta(E) ) dS\\
&  +\int_S u \, \pi_Z ( \escpr{N,T}Z(\escpr{N,T})-2 |N_h| \escpr{N,T}^3   + |N_h|^2\escpr{N,T} \theta(E) ) dS\\
& + \int_S u  (\escpr{N,T}^2+|N_h|^2) \escpr{\nabla_Z \pi(\nu_h),Z}.
\end{aligned}
\end{equation}
Since $|N_h|Z(|N_h|)+\escpr{N,T} Z(\escpr{N,T})=0$ we obtain 
\[
I_h+I_v=\int_S u \escpr{\nabla_Z \pi(\nu_h),Z}
\]
thus proving \eqref{eq:fvflemma}.
\end{proof}

To obtain the first variation formula \eqref{eq:first} we had to assume that the derivatives in the $Z$-direction of the vector fields $\nu_h$ and $Z$ exist and are continuous functions on $S$. Let us see that the area-stationary property of $S$ implies this regularity property. We follow the arguments in \cite{MR4314055} and \cite{MR3984100}. This result was first proven in the sub-Riemannian case by Nicolussi and Serra-Cassano \cite{MR3984100}.  

\begin{definition}
Let $S \subset \hh^1$ be a $(X,Y)$-Lipschitz surface with boundary $\partial S$. We  say $S$ is area-stationary if, for any $C^1$ vector field $U$ with compact support such that $ \supp(U) \cap \partial S= \emptyset$, and associated one-parameter group of diffeomorphisms $\{\varphi_s\}_{s \in \rr}$, we have 
\[
\dfrac{d}{ds}\Big|_{s=0} A_K( \varphi_s(S))=0.
\]
\end{definition}

\begin{remark}
\label{rk:horcur}
 Let $D$ be a domain in the vertical plane $\{y=0\}$ and let $u: D\subset \rr^2 \to \rr$ be a Lipschitz function. Since the vector field $Y$ is a unit normal to the plane, the intrinsic
graph $\text{Gr}(u)$ is parametrized by 
\[
(x,t)\to (x,u(x,t),t-xu(x,t)).
\]
Let $\ga(s)=(x,t)(s)$ be a Lipschitz curve in $D$. Its lifting
\[
\Gamma(s)=(x,u(x,t), t-x u(x,t))(s) \subset \text{Gr}(u)
\]
is also Lipschitz and 
\[
\Gamma'(s)=x' X +(x'u_x+t'u_t)Y+(t'-2ux')T
\]
a.e. in $s$. In particular horizontal curves in $\text{Gr}(u)$  satisfy the ordinary differential equation
\begin{equation}
\label{eq:horode}
t'=2 u(x,t)\, x'.
\end{equation}
\end{remark}

\begin{theorem}
\label{th:straightfoliation}
Let $K\in C^2_+$ be a convex body with $0 \in \intt(K)$. Let $S \subset \hh^1$ be an area-stationary $(X,Y)$-Lipschitz surface. Then $S$ is an $\hh$-regular surface  foliated by horizontal straight lines.
\end{theorem}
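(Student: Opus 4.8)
The plan is to deduce from area-stationarity that the $K$-mean curvature vanishes on the regular part, to convert this vanishing into the statement that $\nu_h$ is parallel along the characteristic curves, and then to integrate in coordinates.

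First I would record that, being Euclidean Lipschitz, $S$ is locally an intrinsic graph $\text{Gr}(u)$ with $u$ Lipschitz, and that by Remark~\ref{rk:horcur} its horizontal curves solve the ODE $t'=2u(x,t)x'$; since $u$ is Lipschitz these characteristic curves exist, are unique through each point, and foliate $S\smallsetminus S_0$. The delicate preliminary point is to place ourselves in a position to apply Proposition~\ref{prop:fvf}, whose hypothesis is that $\nu_h$ and $Z$ be differentiable in the $Z$-direction. This is supplied by the regularity theorem of \cite{MR4314055}: once $S\smallsetminus S_0$ is seen to be $\mathbb{H}$-regular with prescribed continuous (indeed vanishing) mean curvature, the characteristic curves are of class $C^2$, which yields the required differentiability along $Z$. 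Granting this, the definition of area-stationarity together with \eqref{eq:first} gives
\[
\int_S u\,\langle\nabla_Z\pi(\nu_h),Z\rangle\,dS=0
\]
for every admissible normal component $u$, and the fundamental lemma of the calculus of variations forces $H_K=\langle\nabla_Z\pi(\nu_h),Z\rangle=0$ on $S\smallsetminus S_0$.

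Next I would turn $H_K=0$ into a condition on $\nu_h$. Writing $\pi(\nu_h)$ as a horizontal field and using $\nabla X=\nabla Y=0$ for the pseudo-hermitian connection, the derivative $\nabla_Z\pi(\nu_h)$ is again horizontal; by Remark~3.3 in \cite{2020arXiv200704683P} it is orthogonal to $\nu_h$, hence a multiple of $Z$. Therefore $H_K=0$ forces $\nabla_Z\pi(\nu_h)=0$. Since $K\in C^2_+$, the map $\pi=\pi_K$ is a $C^1$ diffeomorphism onto $\partial K$ whose differential is non-degenerate (the geodesic curvature $\kappa$ of $\partial K$ is strictly positive), so $\nabla_Z\pi(\nu_h)=0$ is equivalent to $\nabla_Z\nu_h=0$. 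Because $\nabla J=0$, this also gives $\nabla_Z Z=-J(\nabla_Z\nu_h)=0$, so both $\nu_h$ and $Z$ are parallel along each characteristic curve.

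Finally I would integrate. Writing $\nu_h=aX+bY$ with $a,b$ constant along a characteristic curve $\gamma$, we have $\gamma'=Z=bX-aY$; passing to coordinates through $X=\partial_x+y\,\partial_t$ and $Y=\partial_y-x\,\partial_t$ gives $x'=b$, $y'=-a$ and $t'=ax+by$. As $a,b$ are constant, $x$ and $y$ are affine in the parameter, whence $t'=ax+by$ is constant and $t$ is affine too; thus $\gamma$ is a horizontal straight line. These lines foliate $S\smallsetminus S_0$, and the continuity of $\nu_h$ coming from the $C^2$ regularity of the foliation shows that $S\smallsetminus S_0$ is $\mathbb{H}$-regular. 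The step I expect to be the main obstacle is the preliminary regularity one: for a merely Euclidean Lipschitz surface $\nu_h$ and $Z$ are defined only almost everywhere, so Proposition~\ref{prop:fvf} cannot be applied as it stands. The crux is to bootstrap from area-stationarity to the $C^2$ regularity of the characteristic curves via \cite{MR4314055}, controlling the almost-everywhere identity $H_K=0$ together with the smooth approximation $S_j\to S$ used in the proof of Proposition~\ref{prop:fvf}, so that the pointwise ODE computation for $\gamma$ becomes legitimate.
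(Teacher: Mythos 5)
There is a genuine gap, and it sits exactly where you yourself flag it: the application of Proposition~\ref{prop:fvf} requires $\nu_h$ and $Z$ to be differentiable in the $Z$-direction, and your proposed way of securing this is circular. The regularity theorem of \cite{MR4314055} that you invoke applies to surfaces that are already $\hh$-regular (i.e.\ have \emph{continuous}, nowhere-vanishing horizontal normal) and satisfy a prescribed continuous mean curvature equation; but $\hh$-regularity of $S\smallsetminus S_0$ is precisely part of the conclusion of Theorem~\ref{th:straightfoliation}, and for a merely Euclidean Lipschitz area-stationary surface $\nu_h$ is defined only $\mathcal{H}^2$-a.e., with no a priori continuity and no pointwise mean curvature to prescribe. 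So the chain ``area-stationary $\Rightarrow$ $H_K=0$ via \eqref{eq:first} $\Rightarrow$ $\nabla_Z\pi(\nu_h)=0$ $\Rightarrow$ straight lines'' cannot be started: the first variation formula \eqref{eq:first} is not available at the regularity you have. Everything downstream of that point in your argument (orthogonality of $\nabla_Z\pi(\nu_h)$ to $\nu_h$, non-degeneracy of $d\pi$ from $K\in C^2_+$, the coordinate integration showing the characteristic curves are lines) is fine, but it all hangs on the unproved regularity.

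The paper circumvents this by never using Proposition~\ref{prop:fvf} in the proof of this theorem. Instead it writes the area functional explicitly in intrinsic Lipschitz graph coordinates, computes the first variation of $u\mapsto A(\mathrm{Gr}(u+sv))$ directly to get the weak Euler--Lagrange identity \eqref{eq:fv0} with $M=F(u_x+2uu_t)$ and $F$ as in \eqref{def:F}, and then passes to characteristic coordinates via the bi-Lipschitz map of Lemma~\ref{lm:teps>0}. The key device is the test function $\tilde v\,h/(t_{\eps+h}-t_\eps)$, whose limit as $h\to0$ cancels the zeroth-order term $2\tilde v\,\tilde u_t$ and reduces the identity to the pure DuBois--Reymond form \eqref{eq:tildefvf}; this forces $\tilde M$, hence $u_x+2uu_t$, to be constant a.e.\ along a.e.\ characteristic curve, which is then upgraded to everywhere constancy using the Lipschitz continuity of $t_\eps'$. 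Only at that stage does one obtain continuity of $\nu_h$ (hence $\hh$-regularity) and straightness of the characteristic curves. If you want to salvage your outline, you would need to replace the appeal to Proposition~\ref{prop:fvf} and \cite{MR4314055} with an argument of this direct, coordinate-based type; as written, the proof does not close.
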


\begin{proof}
Let $p$ in $S$. Since $S$ is $(X,Y)$-Lipschitz, by Theorem \ref{th:iftLip}, there exist an open ball $B_r(p)$ and a Lipschitz function $u:D \to \rr$ such that $S \cap B_r(p)=\text{Gr}(u)$ where $\text{Gr}(u)=\{(x,u(x,y),t-xu(x,t))\in \hh^1 : (x,t) \in D\}$. Setting $\pi_K=(\pi_1,\pi_2)$ the area functional is given by
\begin{align*}
A(\text{Gr}(u))=\int_D  (u_x + 2 u u_t)   \pi_1(u_x + 2 u u_t, -1) - \pi_2(u_x + 2 u u_t, -1)  \, dx dt.
\end{align*} 
Given $v \in C_0^{\infty}(D)$, a straightforward computation shows that 
\begin{equation}
\label{eq:fv0}
\dfrac{d}{ds}\Big|_{s=0} A(\text{Gr}(u+sv))= \int_D (v_x+ 2v u_t+ 2 u v_t) M dx dt,
\end{equation}
where 
\begin{equation}
\label{eq:M}
M=F(u_x+2uu_t),
\end{equation}
 and $F$ is the function
\begin{equation}
\label{def:F}
F(x)= \pi_1(x,-1) + x \dfrac{\ptl \pi_1}{\partial x} (x,-1)- \dfrac{\partial \pi_2}{\partial x} (x,-1).
\end{equation}

Let $\Gamma(s)$ be a characteristic curve passing through $p$  in $\text{Gr}(u)$. Let $\ga(s)$ be the projection of $\Gamma(s)$ onto the $xt$-plane. By composition with a left-translation we may assume that $(0,0) \in D$ is the projection of $p$ to the $xt$-plane. We  parameterize $\ga$ by  $s\to (s,t(s))$. By Remark~\ref{rk:horcur} the curve $s \to (s,t(s))$  satisfies the ordinary differential equation $t'=2u$. For $\eps$ small enough, Picard-Lindel\"of's theorem implies the existence of $r>0$ and  a solution $t_{\eps}: ]-r, r[\to \rr$ of the Cauchy problem 
\begin{equation}
\label{eq:teps0}
\begin{cases}
t_{\eps}'(s)=2u(s, t_{\eps}(s)),\\
t_{\eps}(0)=\eps.
\end{cases}
\end{equation}
We define $\ga_{\eps}(s)=(s,t_{\eps}(s))$  so that $\ga_0=\ga$. By Lemma \ref{lm:teps>0} we obtain that $G(\xi,\eps)=(\xi,t_{\eps}(\xi))$ is a biLipschitz homeomorphisms where the determinant of the Jacobian of $G$ is given by $\partial t_{\eps} (s)/ \partial \eps > C>0$ for each $s \in ]-r,r[$ and  a.e.~in $\eps$ .

Any function $\varphi$ defined on $D$ can  be considered as a function of the variables $(\xi,\eps)$ by making $\tilde{\varphi}(\xi,\eps)=\varphi(\xi,t_{\eps}(\xi))$. Since the function $G$ is $C^1$ with respect to $\xi$ we have 
\[
 \frac{\partial \tilde{\varphi}}{\partial \xi}= \varphi_x+  t_{\eps}' \, \varphi_t =\varphi_x+  2 u  \varphi_t.   
 \]
Furthermore, by \cite[Theorem 2 in Section 3.3.3]{Evans2015} or \cite[Theorem 3]{MR1201446}, we may apply the change of variables formula for Lipschitz maps. Assuming that the support of $v$  is contained in a sufficiently small neighborhood of $(0,0)$, we can express the integral \eqref{eq:fv0} as
\begin{equation}
\label{eq:fv1}
 \int_I \bigg(\int_{-r}^{r} \big( \frac{\partial \tilde{v}}{\partial \xi}+ 2 \tilde{v} \, \tilde{u}_t\big) \tilde{M}\,  d\xi \bigg)\, d\eps=0,
\end{equation}
where $I$ is a small interval containing $0$.  In equation \eqref{eq:fv1} we used the area stationary assumption. Instead of $\tilde{v}$ in \eqref{eq:fv1} we consider the function $\tilde{v} h/(t_{\eps+h}-t_{\eps})$, where $h$ is a small enough parameter. Then we obtain 
\begin{align*}
\dfrac{\partial }{\partial \xi} \left({  \dfrac{\tilde{v} h }{t_{\eps+h}-t_{\eps}}} \right)&= \frac{\partial \tilde{v}}{\partial \xi}\, \dfrac{ h }{t_{\eps+h}-t_{\eps}}-  \tilde{v} h \dfrac{t_{\eps+h}'-t_{\eps}'} {(t_{\eps+h}-t_{\eps})^2}\\
&=\frac{\partial \tilde{v}}{\partial \xi}\, \dfrac{ h }{t_{\eps+h}-t_{\eps}}-  2 \tilde{v} h \dfrac{u(\xi, t_{\eps+h}(\xi))-u(\xi,t_{\eps}(\xi)} {(t_{\eps+h}-t_{\eps})^2},
\end{align*}
that tends to
\[
\left(\frac{\partial t_{\eps}}{\partial \eps}\right)^{-1} \left( \frac{\partial \tilde{v}}{\partial \xi} -2  \tilde{v} \tilde{u}_t \right) \qquad a.e. \ \text{in} \ \eps,
\]
when $h$ goes to $0$. Putting $\tilde{v} h/(t_{\eps+h}-t_{\eps})$ in \eqref{eq:fv1} instead of $\tilde{v}$ we get
\[
\int_I \bigg(\int_{-r}^{r}  \dfrac{h \frac{\partial t_{\eps}}{\partial \eps}}{t_{\eps+h}-t_{\eps}}\bigg(\frac{\partial \tilde{v}}{\partial \xi}+ 2 \tilde{v} \, \big(\tilde{u}_t- \dfrac{\tilde{u}(\xi, {\eps+h})-\tilde{u}(\xi,{\eps})} {(t_{\eps+h}-t_{\eps})}\big) \bigg) \tilde{M}  \, d\xi \bigg) d\eps=0.
\]
Using Lebesgue's dominated convergence theorem and letting $h\to 0$ we have
\begin{equation}
\label{eq:tildefvf}
\int_I \left(\int_{-r}^{r}  \frac{\partial \tilde{v}}{\partial \xi} \tilde{M}  \, d\xi \right) d\eps=0.
\end{equation}
Let $\eta: \rr \to \rr$ be a positive function compactly supported in $I$ and for $\rho>0$ we consider the family $\eta_{\rho}(x)=\rho^{-1} \eta(\tfrac{x-\eps_0}{\rho})$, that weakly converge to the Dirac delta distribution. Putting  the test functions $\eta_{\rho}(\eps) \psi(\xi)$ in  \eqref{eq:tildefvf} and letting $\rho\to 0$ we get
\begin{equation}
\label{eq:fvacc}
\int_{-r}^{r}  \psi'(\xi) \tilde{M}(\xi,\eps_0)  \, d\xi=0,
\end{equation}
for each $\psi \in C^{\infty}_0((-r,r))$ for a.e. $\eps_0$ in $I$. Since $F$ is $C^1$ and the distributional derivatives of a Lipschitz function belongs $L^{\infty}$ we gain that $M$ defined in \eqref{eq:M}  is  $L^{\infty}(D)$. In particular we have that $M$ belongs $L^1_{\text{loc}}(D)$, thus by Fubini's Theorem also  $\tilde{M}(\cdot,\eps_0)$ belongs to $L^1_{\text{loc}}((-r,r))$ for a.e. $\eps_0$ in $I$. By equation \eqref{eq:fvacc} we gain that $\tilde{M}(\cdot,\eps_0)$ belongs to $W^{1,1}((-r,r))$ with   $\partial_{\xi} \tilde{M}=0$ a.e. in $(-r,r)$. Then by \cite [Theorem  8.2]{MR2759829} we gain that $\tilde{M}(\cdot,\eps_0)$ is absolutely continuous and $\partial_{\xi} \tilde{M}=0$ a.e. in $(-r,r)$ thus $\tilde{M}(\cdot,\eps_0)$ is constant in $\xi$ for a.e. $\eps_0 \in I$. Therefore $M$ is constant along $\ga_{\eps_0}(s)=(s,t_{\eps_0}(s))$ for a.e. $\eps_0$ in $I$. By Lemma 3.2 in \cite{MR4314055} $F$ is a $C^1$ invertible function, therefore  also $g(s)=(u_x+2u u_t)_{\ga_{\eps_0}(s)}=F^{-1}(M)$ is constant in $s$ for a.e. $\eps_0$ in $I$.
This shows that horizontal normal given by
\begin{equation}
\label{eq:nuhlc}
\nu_h=\dfrac{(u_x+2u u_t)X-Y}{\sqrt{1+(u_x+2u u_t)^2}}
\end{equation}
is constant along the characteristic curves, thus also $Z=-J(\nu_h)$ is constant. Hence the characteristic curves of $S$ are straight lines. Here we follow the approach developed by \cite{MR3984100}.
Moreover, since $2g(s)=2(u_x+2u u_t)_{\ga_{\eps_0}(s)}=t_{\eps}''(s)$ is constant in $s$ we have that
$t_{\eps}(s)$ is a polynomial of the second order given by 
\[
t_{\eps}(s)=\eps+a(\eps) s+ b(\eps)  s^2
\]
where $a(\eps)=2u(0,\eps)$ that is Lipschitz continuous and $b(\eps)=(u_x+2u u_t)(0,\eps)=(u_x+2u u_t)(s,\eps)$. Furthermore, choosing $s>0$ we can easily prove that  $b(\eps)$ is also a Lipschitz function function in $\eps$. Hence in particular the horizontal normal $\nu_h$  given by \eqref{eq:nuhlc} is continuous, then the surface is an $\hh$-regular surface.  
\end{proof} 

\begin{remark}
\label{rk:regularity}
Notice that a $(X,Y)$-Lipschitz area-stationary surface is $\hh$-regular and  its horizontal normal $\nu_h$ is $C^{\infty}$ in the $Z$-direction.
\end{remark}

\begin{lemma}
\label{lm:teps>0}
With the previous notation, there exists a bi-Lipschitz homeomorphism $G(\xi,\eps)=(\xi,t_{\eps}(\xi))$. Moreover, there exists a constant $C>0$ such that $\partial t_{\eps} (s)/ \partial \eps > C$ for each $s \in ]-r,r[$ and  a.e.~in $\eps$.
\end{lemma}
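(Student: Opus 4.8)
The plan is to reduce the whole statement to a Grönwall-type comparison of the solutions $t_\eps$ of the Cauchy problem \eqref{eq:teps0}. First I would fix a compact neighborhood of $(0,0)$ containing all the curves $\ga_\eps(s)=(s,t_\eps(s))$ for $\eps$ in a small interval $I$ and $s\in\,]-r,r[$, and denote by $L$ a Lipschitz constant of $u$ and by $M$ a bound for $|u|$ there. Since the right-hand side $(s,\tau)\mapsto 2u(s,\tau)$ is Lipschitz in $\tau$ uniformly in $s$, Picard--Lindel\"of gives existence and uniqueness of each $t_\eps$, and uniqueness prevents two solutions with distinct initial data from meeting; hence for each fixed $s$ the map $\eps\mapsto t_\eps(s)$ is strictly increasing.

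The key estimate is a two-sided bound on the spread of the solutions. Fix $\eps_1<\eps_2$ in $I$ and set $w(s)=t_{\eps_2}(s)-t_{\eps_1}(s)$, so that $w>0$ on $]-r,r[$ and $w(0)=\eps_2-\eps_1$. From the ODE and the Lipschitz bound on $u$ one gets $|w'(s)|\le 2L\,w(s)$, and integrating the resulting differential inequalities from $0$ via Grönwall yields
\[
(\eps_2-\eps_1)\,e^{-2L|s|}\le t_{\eps_2}(s)-t_{\eps_1}(s)\le (\eps_2-\eps_1)\,e^{2L|s|},\qquad s\in\,]-r,r[\,.
\]
As $|s|<r$, both factors lie between $e^{-2Lr}$ and $e^{2Lr}$. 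In particular, for each fixed $s$ the map $\eps\mapsto t_\eps(s)$ is Lipschitz and strictly increasing with all difference quotients bounded below by $e^{-2Lr}$; being Lipschitz it is differentiable for a.e.\ $\eps$, and at each such point $\partial t_\eps(s)/\partial\eps\ge e^{-2Lr}=:C>0$. This is exactly the second assertion.

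It then remains to show that $G(\xi,\eps)=(\xi,t_\eps(\xi))$ is bi-Lipschitz. The upper (Lipschitz) bound is immediate: $t$ is $2M$-Lipschitz in $\xi$ because $|t_\eps'|\le 2M$ from the ODE, and $e^{2Lr}$-Lipschitz in $\eps$ by the estimate above, so $G$ is Lipschitz. For the lower bound I would write $p_i=(\xi_i,\eps_i)$, $\Delta\xi=\xi_2-\xi_1$, $\Delta\eps=\eps_2-\eps_1$, and split
\[
t_{\eps_2}(\xi_2)-t_{\eps_1}(\xi_1)=\big(t_{\eps_2}(\xi_2)-t_{\eps_2}(\xi_1)\big)+\big(t_{\eps_2}(\xi_1)-t_{\eps_1}(\xi_1)\big)=:A+B,
\]
where $|A|\le 2M|\Delta\xi|$ while $B$ has the sign of $\Delta\eps$ and $|B|\ge e^{-2Lr}|\Delta\eps|$. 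A short two-case argument, according to whether $2M|\Delta\xi|$ is at most or exceeds $\tfrac12 e^{-2Lr}|\Delta\eps|$, then shows $|G(p_2)-G(p_1)|^2=(\Delta\xi)^2+(A+B)^2\ge c^2\big((\Delta\xi)^2+(\Delta\eps)^2\big)$ for a constant $c>0$ depending only on $L,M,r$. Thus $G$ is injective with Lipschitz inverse on its image, i.e.\ a bi-Lipschitz homeomorphism, and its a.e.\ Jacobian determinant equals $\partial t_\eps/\partial\eps\ge C$.

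I expect the only genuine obstacle to be this final lower bound: the difference $t_{\eps_2}(\xi_2)-t_{\eps_1}(\xi_1)$ mixes the $\xi$- and $\eps$-dependence, so the monotonicity in $\eps$ by itself does not control it. The cone/two-case decomposition, separating the regime where $\Delta\xi$ dominates from the one where $\Delta\eps$ dominates, is precisely what forces the estimate through; everything else is a routine consequence of the Grönwall comparison and Rademacher's theorem.
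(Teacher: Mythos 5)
Your proof is correct, and it reaches the conclusion by a genuinely different and more self-contained route than the paper. The paper obtains the upper Lipschitz bound $|t_{\eps_2}(s)-t_{\eps_1}(s)|\le e^{Lr}|\eps_2-\eps_1|$ by citing a comparison theorem for ODEs with Lipschitz right-hand side, realizes the inverse of $\eps\mapsto t_\eps(s)$ as the time-reversed flow $\bar{\chi}_t$ of the same Cauchy problem (citing the same theorem again for its Lipschitzness), invokes the Invariance of Domain Theorem to conclude that $G$ is a homeomorphism, and only then extracts the lower bound $\ptl t_\eps/\ptl\eps>C$ indirectly, from the a.e.\ Jacobian formula for $DG^{-1}$ combined with the Lipschitz constant of $G^{-1}$. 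You instead run the Gr\"onwall comparison in both directions at once, getting the two-sided bound $e^{-2L|s|}\le (t_{\eps_2}(s)-t_{\eps_1}(s))/(\eps_2-\eps_1)\le e^{2L|s|}$ on all difference quotients; this yields the monotonicity, the Lipschitz dependence, and the explicit quantitative constant $C=e^{-2Lr}$ for the a.e.\ derivative in one stroke (Rademacher then finishes the second assertion), and your cone-type two-case estimate establishes the bi-Lipschitz lower bound for $G$ directly, without inverting the flow and without topological input. What you lose is only the paper's explicit surjectivity of $G$ onto a full neighborhood of the central characteristic curve, which it gets for free from the reversed flow; if that openness of the image is needed downstream it still follows from invariance of domain applied to your injective continuous $G$, so nothing essential is missing. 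Two cosmetic points: your conclusion reads $\ptl t_\eps/\ptl\eps\ge e^{-2Lr}$ rather than a strict inequality, which is repaired by shrinking $C$; and you should note (as you implicitly use) that each $t_\eps$ is $C^1$ in $s$, so that $w=t_{\eps_2}-t_{\eps_1}$ is differentiable and the differential inequality can be integrated classically.
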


\begin{proof}
Here we exploit an argument similar to the one developed in \cite{MR3984100}.
By Theorem~2.8 in \cite{MR2961944} we gain that $t_{\eps}$ is Lipschitz with respect to $\eps$ with Lipschitz constant less than  or equal to $e^{Lr}$.  Fix $s \in ]-r,+r[$, the inverse of the function $\eps \to t_{\eps}(s)$ is given by $\bar{\chi}_{t}(-s)= \chi_{t}(-s)$ where $\chi_t$ is the unique solution of the following Cauchy problem
\begin{equation}
\label{eq:tau}
\begin{cases}
\chi_{t}'(\tau)=2u(\tau, \chi_{t}(\tau))\\
\chi_{t}(s)=t.
\end{cases}
\end{equation}
Again by Theorem~2.8 in \cite{MR2961944} we have that $\bar{\chi}_{t}$ is Lipschitz continuous with respect to $t$, thus the function $\eps \to t_{\eps}$ is a locally biLipschitz homeomorphisms.

We  consider the following Lipschitz coordinates 
\begin{equation}
\label{eq:Gcc}
G(\xi,\eps)=(\xi,t_{\eps}(\xi))=(s,t)
\end{equation}
around the characteristic curve passing through  $(0,0)$. Notice that, by the uniqueness result for  \eqref{eq:teps},  $G$ is injective. 
Given $(s,t)$ in the image of $G$ and using the inverse function $\bar{\chi}_{t}$ defined in \eqref{eq:tau} we find $\eps$ such that $t_{\eps}(s)=t$, therefore $G$ is surjective. By the Invariance of Domain Theorem \cite{MR1511684},  $G$  is a homeomorphism. By the uniqueness result of the Cauchy problem \eqref{eq:teps} we get that the map $\eps \to t_{\eps}(s)$ is not decreasing in $\eps$, then we have 
\begin{equation}
\label{eq:teps>0}
\frac{\partial t_{\eps}(s)}{\partial \eps} \ge0
\end{equation}
for a.e. in $\eps$.
The differential  of $G$ is defined by  
\begin{equation}
\label{eq:JacG}
DG=  \left(\begin{array}{cc} 1 & 0\\ t_{\eps}' & \frac{\partial t_{\eps}}{\partial \eps} \end{array}\right)
\end{equation}
almost everywhere in $\eps$ and the differential of $G^{-1}$ is given by 
\begin{equation}
\label{eq:JacGinv}
D G^{-1}= (\tfrac{\partial t_{\eps}}{\partial \eps})^{-1} \left(\begin{array}{cc} \frac{\partial t_{\eps}}{\partial \eps} & 0\\ -t_{\eps}' & 1 \end{array}\right)
\end{equation}
almost everywhere in $\eps$.  Since $G^{-1}$ is Lipschitz we gain that there exists a constant $C>0$ such that
\[
|\mathbf{J}_{G^{-1}}|=|\det(D G^{-1})|= |\tfrac{\partial t_{\eps}}{\partial \eps}|^{-1} \le \dfrac{1}{C}
\] 
a.e. in  $\eps$. Thus, by \eqref{eq:teps>0} we deduce that $\partial t_{\eps} / \partial \eps>C>0$ a.e. in $\eps$. 
\end{proof}

\section{A Codazzi-like equation for $(X,Y)$-Lipschitz minimal surfaces}
\label{sec:codazzi}

In this section we shall show that, given an area-stationary surface $S$, the function $\escpr{N,T}/|N_h|$ satisfies a differential equation along almost every characteristic curve on $S$.

We first prove a technical result similar to Lemma~4.2 in \cite{MR3406514}. We include the proof, with only slight differences, for completeness.

\begin{lemma}
\label{lm:ODECodazzi}
Given $a,b \in \rr$, the only solution of equation
\begin{equation}
\label{eq:ODECodazzi}
y''-6y'y+4y^3=0
\end{equation}
about the origin with initial conditions $y(0) = a$, $y'(0) = b$, is
\begin{equation}
\label{eq:uab}
y_{a,b} (s)=\dfrac{a-(2a^2-b)s}{1-2as+(2a^2-b)s^2}. 
\end{equation}
Moreover, we have 
\begin{equation}
\label{eq:uabq}
y^2_{a,b} (s)-y_{a,b} '  (s)= \dfrac{a^2-b}{(1-2as+(2a^2-b)s^2)^2}
\end{equation}
If $y_{a,b}$ is defined for every $ s\in \rr$ then either $a^2-b>0$ or $y_{a,b}\equiv 0$.
\end{lemma}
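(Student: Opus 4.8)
The plan is to treat equation \eqref{eq:ODECodazzi} as a second-order autonomous ODE and verify directly that the proposed formula \eqref{eq:uab} is a solution satisfying the prescribed initial conditions, then appeal to uniqueness. Since the right-hand side $6y'y - 4y^3$ is smooth (polynomial) in $(y,y')$, the Picard--Lindel\"of theorem guarantees a unique local solution about the origin with data $y(0)=a$, $y'(0)=b$; hence it suffices to exhibit one solution with these data. First I would substitute \eqref{eq:uab} into the ODE. Writing $P(s) = 1 - 2as + (2a^2-b)s^2$ for the denominator and $Q(s) = a - (2a^2-b)s$ for the numerator, so that $y = Q/P$, I would compute $y'$ and $y''$ via the quotient rule and reduce $y'' - 6y'y + 4y^3$ to a single rational expression with denominator $P^3$; the claim is that the numerator vanishes identically. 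I would also check $y(0) = Q(0)/P(0) = a$ and, from $y' = (Q'P - QP')/P^2$ evaluated at $s=0$, that $y'(0) = b$.

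The cleanest route to \eqref{eq:uabq}, and simultaneously a consistency check on \eqref{eq:uab}, is to observe the factored structure. I would compute $y' = (Q'P - QP')/P^2$ explicitly; a direct expansion should yield
\begin{equation*}
y^2 - y' = \frac{Q^2 - (Q'P - QP')}{P^2} = \frac{a^2 - b}{P^2},
\end{equation*}
the point being that the polynomial $Q^2 - Q'P + QP'$ collapses to the constant $a^2-b$. This identity is the heart of the matter: it exhibits $y^2 - y'$ as a constant multiple of $P^{-2}$, which is exactly \eqref{eq:uabq}. Differentiating this relation and combining it with itself then furnishes an alternative verification that \eqref{eq:ODECodazzi} holds, since $y'' = 2yy' - (y^2-y')' $-type manipulations can be organized so that the ODE follows algebraically from \eqref{eq:uabq} without a brute-force second-derivative computation.

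For the final assertion, suppose $y_{a,b}$ is defined for all $s \in \rr$. This forces the denominator $P(s) = 1 - 2as + (2a^2-b)s^2$ never to vanish on $\rr$. If the leading coefficient $2a^2 - b$ is positive, then $a^2 - b = (2a^2-b) - a^2$ could have either sign, so I must argue more carefully: I would analyze the discriminant of $P$, namely $4a^2 - 4(2a^2-b) = 4(b - a^2) = -4(a^2-b)$. If $a^2 - b > 0$ the discriminant is negative and, provided $2a^2-b > 0$, $P$ has no real roots, consistent with global existence. If $a^2 - b < 0$ the discriminant is positive and $P$ has real roots unless $2a^2-b = 0$, in which case $P$ is linear and still vanishes somewhere (as $a \neq 0$ forces a root), contradicting global existence; and if $a^2-b = 0$ then by \eqref{eq:uabq} we have $y^2 \equiv y'$, from which one checks $P(s) = (1-as)^2$ and $y_{a,b} = a/(1-as)$, which blows up for $a \neq 0$ and reduces to $y \equiv 0$ when $a = 0$. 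Collecting these cases shows that global existence forces either $a^2 - b > 0$ or $y_{a,b} \equiv 0$. The main obstacle I anticipate is the careful case bookkeeping in this last step, where the borderline cases $a^2-b=0$ and $2a^2-b=0$ must be handled separately to rule out a hidden pole of $P$.
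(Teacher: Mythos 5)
Your proposal is correct and follows essentially the same route as the paper: verify the closed-form expression directly against the initial data, invoke Picard--Lindel\"of uniqueness, and settle the global-definedness claim by the sign of the discriminant $4(b-a^2)$ of the denominator, treating separately the degenerate situation where numerator and denominator share a root (which, as you note, forces $a^2=b$ and hence $y_{a,b}=a/(1-as)$, globally defined only when $a=0$). The one genuinely nice addition is your observation that $Q^2-Q'P+QP'$ collapses to the constant $a^2-b$, so that \eqref{eq:ODECodazzi} follows from the first-order identity $(y^2-y')'=4y\,(y^2-y')$ rather than a brute-force second-derivative computation, but this is a computational shortcut within the same strategy rather than a different proof.
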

\begin{proof}
By the uniqueness of solutions for ordinary differential equations we know that there exists a unique solution \eqref{eq:ODECodazzi}. 
Since we have 
\[
y_{a,b}'=\dfrac{b-2as(2a^2-b)+(2a^2-b)^2s^2}{(1-2as+(2a^2-b)s^2)^2}
\]
and
\[
y_{a,b}''=\dfrac{2(a-(2a^2-b)s)(3b-2as(2a^2-b)+(2a^2-b)^2 s^2-2a^2 )}{(1-2as+(2a^2-b)s^2)^3},
\]
a straightforward computation shows that $y_{a,b} (s)$ solves  \eqref{eq:ODECodazzi} and satisfies \eqref{eq:uabq}.

Let us write $y_{a,b}={p(s)}/{r(s)}$ where $p(s)=a-(2a^2-b)s$ and $r(s)=1-2as+(2a^2-b)s^2$  If $y_{a,b}$ is defined for every $ s\in \rr$, then  there are two possibilities: $r(s)$ has  no real zeroes or $r(s)$ has at least a zero at $s_0 \in \rr$. In the first case the discriminant is $4(b-a^2)$ is negative. In the second case $r(s_0)=0$ we must also have $p(s_0) = 0$ in order to have $y_{a,b}(s)$ well defined at $s_0$. Hence $y_{a,b}(s)$ can be expressed as the quotient of a constant over a degree one polynomial. Then by \eqref{eq:uab} we get that $y_{a,b}=a (1-2as)^{-1}$ which has a pole unless $a=0$, hence $y_{a,b}(s)\equiv0$.
\end{proof}

\begin{remark}
\label{rk:dilationODE}
If $f(s)$ is a solution of \eqref{eq:ODECodazzi}, then for each positive constant $\lambda$ the function
$f_{\lambda}(s)=\lambda^{-1}f(\tfrac{s}{\lambda})$
is still a solution of \eqref{eq:ODECodazzi}.
\end{remark}

\begin{remark}
Let $f$ be a solution of \eqref{eq:ODECodazzi}, then  $f$ belongs to $C^{\infty}$ class.
Indeed setting $y_1=f$  and $y_2=y_1'$ we have that \eqref{eq:ODECodazzi} is equivalent to 
\[
\begin{pmatrix} y'_1\\ y'_2 \end{pmatrix} =F(y_1,y_2)= \begin{pmatrix} y_2 \\ -6 y_1 y_2-4y_1^3 \end{pmatrix}.
\]
Since $F$ is $C^{\infty}$ we obtain that $y_1=f$ is smooth.
\end{remark}

\begin{proposition}
\label{pr:satode}
Let $S$ be a complete oriented area-stationary $(X,Y)$-Lipschitz surface. Then along any  arc-length parameterized geodesic $\bar{\ga}_{\eps}(s)$  in $S$, the function $\escpr{N,T}/|N_h|(\bar{\ga}_{\eps}(s))$ satisfies the ordinary differential equation \eqref{eq:ODECodazzi} for a.e. $\eps$.
Furthermore, $\escpr{N,T}/|N_h|(\bar{\ga}_{\eps}(s))$ is smooth in $s$ for a.e. $\eps$. 
\end{proposition}
\begin{proof}
Let $p$ in $S$. Since $S$ is $(X,Y)$-Lipschitz, Theorem \ref{th:iftLip} implies the existence of an open ball $B_r(p)$ and of a Lipschitz function $u:D \to \rr$ such that $S \cap B_r(p)=\text{Gr}(u)$. 
Let $\Gamma(s)$ be a characteristic curve passing through $p$  in $\text{Gr}(u)$. Let $\ga(s)$ be the projection of $\Gamma(s)$ onto the $xt$-plane, and $(0,0) \in D$ the projection of $p$ to the $xt$-plane. We  parameterize $\ga$ by  $s\to (s,t(s))$. By Remark \ref{rk:horcur} (see also \cite[Remark 2.5]{MR4314055}) the curve $s \to (s,t(s))$  satisfies the ordinary differential equation $t'=2u$ and 
\[
\Ga'(s)=X+(u_x+2u u_t)Y.
\]
As computed in \S~2.6 in \cite{MR4314055}, at smooth points of the graph of $u$, the unit normal can be computed as $N=\tilde{N}/|\tilde{N}|$, where
\[
\tilde{N}=(u_x+2uu_t)-Y+u_tT.
\]
The quantity $|\tilde{N}|$ is the Riemannian Jacobian of the parameterization of the intrinsic graph of $u$ by coordinates $(x,t)$. So we have
\begin{equation}
\label{eq:NhdS}
|N_h|\, dS=\sqrt{1+(u_x+2uu_t)^2}\,dx\,dt.
\end{equation}
Since  we have
$$\nu_h=\dfrac{(u_x+2u u_t)X-Y}{\sqrt{1+(u_x+2u u_t)^2}} \quad \text{and} \quad Z=-J(\nu_h)$$
we get $Z=- \Ga'(s)/|\Ga(s)|$. For $\eps$ small enough, Picard-Lindel\"of's theorem implies the existence of $r>0$ and  a solution $t_{\eps}:\ ]-r,r[\to \rr$ of the Cauchy problem 
\begin{equation}
\label{eq:teps}
\begin{cases}
t_{\eps}'(s)=2u(s, t_{\eps}(s)),\\
t_{\eps}(0)=\eps.
\end{cases}
\end{equation}
We define $\ga_{\eps}(s)=(s,t_{\eps}(s))$  so that $\ga_0=\ga$. 
Since $S$ is area-stationary we have that $(u_x+2u u_t)$ is constant along $\ga_{\eps}(s)$. Moreover
\[
t_{\eps}''(s)=2(u_x+2u u_t)(\ga_{\eps}(s))=2 b(\eps)=2(u_x+2u u_t)(0,\eps)
\]
is constant as a a function of $s$. Thus we have
\begin{equation}
\label{eq:taylorex}
t_{\eps}(s)=\eps+a(\eps) s+ b(\eps)  s^2,
\end{equation}
where $a(\eps)=2u(0,\eps)$. Choosing $s>0$ in \eqref{eq:taylorex} we can easily prove that  $b(\eps)$, that a priori is only continuous, is also a Lipschitz function. By equation (7) in   \cite[Theorem 3.7]{MR3984100} we have 
\begin{equation}
\label{eq:crossderivatives}
\dfrac{\partial}{\partial \eps} \dfrac{\partial}{\partial s} t_{\eps}(s)= \dfrac{\partial}{\partial s} \dfrac{\partial}{\partial \eps} t_{\eps}(s)
\end{equation}
a.e. in $\eps$, where the equality has to be interpreted in the sense of distributions.
Putting \eqref{eq:teps} in the left hand side of \eqref{eq:crossderivatives} and applying the chain rule for Lipschitz functions (see  \cite[Remark 3.6]{MR3984100})   we get 
\[
2 u_t(s,t_{\eps}(s)) (1+a'(\eps)s+b'(\eps) s^2)= (a'(\eps)+ 2 b'(\eps)s)
\]
a.e. in $\eps$.
Therefore we get
\[
u_t(s,t_{\eps}(s)) =\dfrac{ \tfrac{a'(\eps)}{2}+  b'(\eps)s}{ (1+a'(\eps)s+b'(\eps) s^2)},
\]
a.e. in $\eps$, since by Lemma \ref{lm:teps>0} we have $\partial t_{\eps} / \partial \eps >0$ a.e. in $\eps$.  Since we have  $Z=- \Ga'(s)/|\Ga(s)|$ we consider $\tilde{\ga}_{\eps}(s)= \ga_{\eps}(-s)$. Then we have that 
\[
u_t(\tilde{\ga}_{\eps}(s))=\dfrac{ \tfrac{a'(\eps)}{2}-  b'(\eps)s}{ (1-a'(\eps)s+b'(\eps) s^2)}
\]
solves the equation \eqref{eq:ODECodazzi} with initial condition $y(0)=a'(\eps)/2$ and 
$y'(0)=\tfrac{a'(\eps)^2}{2}- b'(\eps)$ for a.e. $\eps$. Moreover we have 
\[
t_{\eps}(-s)=\eps-a(\eps) s+ b(\eps)  s^2
\]
For each $\eps$ fixed we have $b(\eps)=(u_x+2u u_t)(\tilde{\ga}_{\eps})$ is constant, let 
\[
\bar{\ga}_{\eps} (s)=\tilde{\ga}_{\eps}\left( s/\sqrt{1+b(\eps)^2} \right)
\]
be an arc-length parametrization of $\tilde{\gamma}_{\eps}$. Then  Remark \ref{rk:dilationODE} shows that also 
\[
\escpr{N,T}/|N_h|(\bar{\ga}_{\eps})=\dfrac{u_t}{\sqrt{1+(u_x+2u u_t)^2}} (\bar{\ga}_{\eps})
\]
is a solution of \eqref{eq:ODECodazzi} a.e. in $\eps$.
\end{proof}

\section{Second Variation Formula}
\label{sec:second}

In this section, we compute the second variation formula. First of all we need the following Lemma
\begin{lemma}
\label{lm:cd2}
Let $U$ be a $C^2$ horizontal vector field in $\hh^1$ with associated flow $\{\varphi_s\}_{s\in\rr}$. Let $p \in \hh^1$ and $e \in T_p \hh^1$. Define  the smooth curve $\beta(s)=\varphi_s(p)$ and the smooth vector field $E(s)=(d\varphi_s)_p(e)$ along $\beta$. Then we have 
\[
\dfrac{\nabla^2}{ds^2}\Big|_{s=0} E(s)= \nabla_e \nabla_U U+2\, \escpr{J(\nabla_{U_p} U),e}\,T_p + 2\, \escpr{J(U_p),\nabla_{e} U}\,T_p.
\]
\end{lemma}

\begin{proof}
From \eqref{eq:1stnabla} we get
\[
\dfrac{\nabla^2}{ds^2}\Big|_{s=0} E(s)=\dfrac{\nabla}{ds}\Big|_{s=0} \big(  \nabla_{E(s)} U_s+2\escpr{J(U_s),E(s)}\,T\big).
\]
On the one hand we have
\begin{align*}
\dfrac{\nabla}{ds}\Big|_{s=0}   \nabla_{E(s)} U_s&= \dfrac{\nabla}{ds}\Big|_{s=0} \sum_{i=1}^3 g_i(s) \nabla_{(\frac{\partial }{\partial x_i})_{\beta(s)} }U_s \\
&= \sum_{i=1}^3 g_i'(0) \nabla_{(\frac{\partial }{\partial x_i})_{p}} U+ g_i(0)  \dfrac{\nabla}{ds}\Big|_{s=0} \nabla_{(\frac{\partial }{\partial x_i})_{p}} U  \\
&= \sum_{i=1}^3 e(f_i) \nabla_{(\frac{\partial }{\partial x_i})_p } U+ e_i  \nabla_U \nabla_{(\frac{\partial }{\partial x_i})_{p}} U .
\end{align*}
Notice that  
\begin{align*}
\nabla_e(\nabla_U U )=\nabla_e \nabla_{\sum_{i=1}^3 f_i \frac{\partial }{\partial x_i}} U&= \sum_{i=1}^3 e(f_i) \nabla_{(\frac{\partial }{\partial x_i})_p } U+ f_i(p)\, \nabla_e \nabla_{(\frac{\partial }{\partial x_i}) } U\\
&=\sum_{i=1}^3 e(f_i) \nabla_{(\frac{\partial }{\partial x_i}) _p} U+ \sum_{i,j=1}^3 f_i(p)\, e_j\nabla_{(\frac{\partial }{\partial x_j})} \nabla_{(\frac{\partial }{\partial x_i}) } U\\
&=\sum_{i=1}^3 e(f_i) \nabla_{(\frac{\partial }{\partial x_i})_p } U+ \sum_{i,j=1}^3 f_i(p)\, e_j\nabla_{(\frac{\partial }{\partial x_i})} \nabla_{(\frac{\partial }{\partial x_j}) } U\\
&=\sum_{i=1}^3 e(f_i) \nabla_{(\frac{\partial }{\partial x_i})_p } U+ \sum_{i=1}^3  e_i\nabla_{U} \nabla_{(\frac{\partial }{\partial x_i}) } U,
\end{align*}
where we use that the Riemann tensor of $\nabla$ vanishes
\[
0=R\big(\frac{\partial }{\partial x_j} , \frac{\partial }{\partial x_i}\big)\, U= \nabla_{\left(\frac{\partial }{\partial x_i}\right)} \nabla_{\left(\frac{\partial }{\partial x_j}\right) } U- \nabla_{\left(\frac{\partial }{\partial x_j}\right)} \nabla_{\left(\frac{\partial }{\partial x_i}\right) } U
\]
Therefore
\begin{equation}
\label{eq:cd1t}
\dfrac{\nabla}{ds}\Big|_{s=0}   \nabla_{E(s)} U_s= \nabla_e(\nabla_U U ) + \nabla_e U'
\end{equation}
On the other hand, since $\nabla J= 0$ we have 
\begin{equation}
\label{eq:cdt2}
\begin{aligned}
\dfrac{\nabla}{ds}\Big|_{s=0}  2\escpr{J(U_s),E(s)}\,T&= 2\escpr{J(\nabla_U U),e}\, T+ 2\escpr{J(U),\dfrac{\nabla}{ds}\Big|_{s=0} E(s) }\,T\\
&= 2\escpr{J(\nabla_U U),e}\, T + 2\escpr{J(U),\nabla_e U }\,T,
\end{aligned}
\end{equation}
where we have used once again Lemma \ref{lm:cdlb}  and the fact that $J(U)$ is horizontal.
Finally adding \eqref{eq:cd1t} and \eqref{eq:cdt2} we get the result.
\end{proof}

Now we compute the second variation formula

\begin{theorem}
\label{th:secondvf}
Let $K\in C^2_+$ be a convex body with $0 \in \intt(K)$. Let $S \subset \hh^1$ be an area-stationary $(X,Y)$-Lipschitz  surface. Let $U$ be an horizontal $C^2$ vector field compactly supported in $\hh^1$, with $\partial S \cap \text{supp}(U)= \emptyset$, and associated flow $\{\varphi_s\}_{s \in \rr}$. Then the second variation of the sub-Finsler area induced by $U$ is given by 
\begin{equation}
\dfrac{d^2}{ds^2}\Big|_{s=0} A_K( \varphi_s(S)) =\int_S \left( Z(f)^2+ q f^2 \right) \dfrac{|N_h|}{\kappa(\pi_K({\nu_h}))} dS,
\end{equation}
where 
\[
q=4\, \bigg\{ Z\bigg( \dfrac{\escpr{N,T}}{|N_h|}\bigg)-  \dfrac{\escpr{N,T}^2}{|N_h|^2}\bigg\},
\]
$\kappa$ is the positive curvature of the boundary $\partial K$ and $f=\escpr{U,\nu_h}$.
\end{theorem}

\begin{remark}
As noticed in the introduction, there is a slight difference in this second variation formula with respect to the sub-Riemannian one computed in \cite{MR3406514}, due to the definition of $Z$ as $-J(\nu_h)$ in this paper, instead of $J(\nu_h)$ as in \cite{MR3406514}. This change was introduced in \cite{2020arXiv200704683P} as the most convenient way of dealing with the lack of symmetry of the sub-Finsler norm.
\end{remark}

\begin{proof}[Proof of Theorem~\ref{th:secondvf}]
First of all we notice that $\escpr{N,T}/|N_h|$ is smooth in the $Z$-direction by~Proposition \ref{pr:satode}, and so $q$ is well defined. Moreover, by Theorem \ref{th:straightfoliation} an area-stationary $(X,Y)$-Lipschitz  surface is $\hh$-regular. The area functional is given by
\begin{align*}
A_K( \varphi_s(S))=\int_S \escpr{\pi(V(s)), V(s)}\, dS,
\end{align*}
where
\begin{equation*}
V(s)=\escpr{E_1(s),T} \,T \times E_2(s) + \escpr{E_2(s),T} \,E_1(s) \times T,
\end{equation*}
and $dS$ is the Riemannian area element. At a regular point $p\in S$, a basis of tangent vectors to  $\varphi_s(S)$ at $\varphi_s(p)$ is given~by $E_1(s)=(d\varphi_s)_p(Z_p)$ and $E_2(s)=(d\varphi_s)_p(E_p)$.

Since by Lemma \ref{lm:novanishing} the norm of $V(s)$ is strictly positive a.e. in $\supp (U) \cap S$, we have 
\begin{equation}
\label{eq:svdomination}
 \begin{aligned}
\dfrac{d^2}{ds^2} \norm{V(s)}_{*}&= \dfrac{d}{ds}\escpr{\pi_K(V(s)), \dfrac{\nabla}{ds} V(s) }\\
&=\escpr{d \pi (V(s)) \dfrac{\nabla}{ds} V(s), \dfrac{\nabla}{ds} V(s) } + \escpr{\pi_K(V(s)), \dfrac{\nabla^2}{ds^2} V(s) } \\
&\le \frac{1}{\bar{\kappa}}\, \big|\dfrac{\nabla}{ds} V(s)\big|^2+ \beta \,\big|\dfrac{\nabla^2}{ds^2} V(s)\big|,
\end{aligned} 
\end{equation}
a.e.~in $\supp (U) \cap S$, where $\bar{\kappa}= \min_{\norm{v}_*=1} \kappa(v)$ and $\beta$ is the positive constant defined in \ref{sc:subper} with $K'$ equal to the Euclidean ball centered at the origin.
Setting $\tilde{N}(s)=E_1(s) \times E_2(s)$ we get
\begin{align*}
\dfrac{\nabla^2}{ds^2} \tilde{N}(s)&=\dfrac{\nabla^2}{ds^2} E_1(s)\times E_2(s)+ 2\dfrac{\nabla}{ds} E_1(s) \times \dfrac{\nabla}{ds} E_2(s)+E_1(s)\times \dfrac{\nabla^2}{ds^2} E_2(s).
\end{align*}
Then Lemmas \ref{lm:cdlb} and \ref{lm:cd2} imply
\begin{align*}
\dfrac{\nabla}{ds} E_i(s)&= \nabla_{E_i(s)} U+2\escpr{J(U_p),E_i(s)}\,T,
\\
  \dfrac{\nabla^2}{ds^2} E_i(s)& =\nabla_{E_i(s)} \nabla_U U +2\, \escpr{J(\nabla_{U} U),E_i(s)} T + 2\, \escpr{J(U_p),\nabla_{E_i(s)} U}\,T.  
\end{align*}
and 
$|E_1(s)|=|d\varphi_s(Z)| \le C'$ and $|E_1(s)|=|d\varphi_s(E)| \le C'$ for $s \in (-s_0,s_0)$, where the constant $C'$ is independent of $s$. Then, writing the covariant derivative $\nabla_{E_i(s)} U$ and $\nabla_{E_i(s)} \nabla_U U$ in standard coordinates, we obtain
\[
\big|\dfrac{\nabla^2}{ds^2} \tilde{N}(s)\big| \le \tilde{C}\, \|U\|_{C^2}
\]
a.e. in $\supp (U) \cap S$ for a suitable constant $\tilde{C}>0$ and where $\|U\|_{C^2}$ denotes the standard $C^2$ norm of $U$.
Then, since $V(s)=\tilde{N}(s)- \escpr{\tilde{N}(s),T}\,T$ and thus $\tfrac{\nabla^2}{ds^2}V(s)=\tfrac{\nabla^2}{ds^2}\tilde{N}(s)- \escpr{\tfrac{\nabla^2}{ds^2}\tilde{N}(s),T}\,T$, we have
\[
\big|\dfrac{\nabla^2}{ds^2} V(s)\big|\le 2\,\big|\dfrac{\nabla^2}{ds^2} \tilde{N}(s)\big|   \le 2\,\tilde{C}\, \|U\|_{C^2}
\]
a.e.~in $\supp (U) \cap S$. 
Then, since $\supp (U) \cap S$ is compact, Lebesgue's dominated convergence theorem yields
\[
\dfrac{d^2}{ds^2}\Big|_{s=0} A_K( \varphi_s(S))=\int_S \dfrac{d^2}{ds^2}\Big|_{s=0} \escpr{\pi(V(s)), V(s)}\, dS.
\]

By Remark 3.3 in \cite{2020arXiv200704683P} we get 
\begin{equation}
\label{eq:UUsv}
\dfrac{d}{ds}  \escpr{\pi(V(s), \dfrac{\nabla}{ds} V(s)}=\escpr{ \dfrac{\nabla}{ds} \pi(V(s), \dfrac{\nabla}{ds} V(s)}+ \escpr{\pi(V(s), \dfrac{\nabla}{ds} \dfrac{\nabla}{ds} V(s)}
\end{equation}
Again by  Remark 3.3 in \cite{2020arXiv200704683P} we have 
\[
0=\dfrac{d}{ds} \escpr{\dfrac{\nabla}{ds} \pi(V(s)), V(s)}=\escpr{\dfrac{\nabla}{ds}\dfrac{\nabla}{ds} \pi(V(s)), V(s) }+ \escpr{\dfrac{\nabla}{ds} \pi(V(s)),\dfrac{\nabla}{ds} V(s) }.
\]
Then we gain 
\begin{equation}
\label{eq:nabnabpi}
\escpr{\dfrac{\nabla}{ds} \pi(V(s)),\dfrac{\nabla}{ds} V(s) }=-\escpr{\dfrac{\nabla}{ds}\dfrac{\nabla}{ds} \pi(V(s)), V(s) }.
\end{equation}
Therefore substituting \eqref{eq:nabnabpi} in \eqref{eq:UUsv} we obtain
\begin{equation*}
\dfrac{d}{ds}  \escpr{\pi(V(s), \dfrac{\nabla}{ds} V(s)}= \escpr{\pi(V(s), \dfrac{\nabla}{ds} \dfrac{\nabla}{ds} V(s)}-\escpr{\dfrac{\nabla}{ds}\dfrac{\nabla}{ds} \pi(V(s)), V(s) }
\end{equation*}
Evaluating the previous equality at $s=0$ we get 
\begin{equation}
    \label{th:secondvf-1}
\dfrac{d^2}{ds^2}\Big|_{s=0} \escpr{\pi(V(s)), V(s)}= \escpr{\pi(\nu_h),\dfrac{\nabla^2}{ds^2}\Big|_{s=0} V(s)}- \escpr{\dfrac{\nabla^2}{ds^2}\Big|_{s=0} \pi(V(s)), \nu_h}=\textbf{I}+\textbf{II},
\end{equation}
since $V(0)= |N_h| \nu_h$.

As 
\begin{align*}
\dfrac{\nabla}{ds} V(s)&=\escpr{\dfrac{\nabla}{ds} E_1(s),T} T \times E_2(s)+ \escpr{E_1(s),T} T \times \dfrac{\nabla}{ds}  E_2(s) \\
& + \escpr{\dfrac{\nabla}{ds} E_2(s),T} E_1(s) \times T+  \escpr{E_2(s),T} \dfrac{\nabla}{ds} E_1(s) \times T.
\end{align*}
we obtain 
\begin{align*}
\dfrac{\nabla^2}{ds^2}\Big|_{s=0} V(s)&=\escpr{\dfrac{\nabla^2}{ds^2}\Big|_{s=0} E_1(s),T} T \times E_2(0)+ 2\escpr{\dfrac{\nabla}{ds}\Big|_{s=0} E_1(s),T} T \times \dfrac{\nabla}{ds}\Big|_{s=0} E_2(s)\\
& + \escpr{E_1(0),T} T \times \dfrac{\nabla^2}{ds^2}\Big|_{s=0} E_2(s) + \escpr{\dfrac{\nabla^2}{ds^2}\Big|_{s=0} E_2 (s),T} E_1(0) \times T \\
&  + 2\escpr{\dfrac{\nabla}{ds}\Big|_{s=0} E_2(s),T} \dfrac{\nabla}{ds}\Big|_{s=0} E_1(s) \times T +\escpr{E_2(0),T} \dfrac{\nabla^2}{ds^2}\Big|_{s=0} E_1 (s) \times T.
\end{align*}
By Lemma \ref{lm:cdlb} 
\[
\dfrac{\nabla}{ds}\Big|_{s=0} E_i (s)= \nabla_{E_i(0)} U + 2 \escpr{J(U),E_i(0)}T,
\]
for $i=1,2$.
By Lemma \ref{lm:cd2} we gain
\[
\dfrac{\nabla^2}{ds^2}\Big|_{s=0} E_i(s)= \nabla_{E_i(0)} ( \nabla_U U)+2 \escpr{J(\nabla_U U),E_i(0)} T + 2 \escpr{J(U),\nabla_{E_i(0)} U}T.\\
\]
Noticing that $\nabla_Z U $ is horizontal we get 
\begin{align*}
\dfrac{\nabla^2}{ds^2}\Big|_{s=0} V(s)&=\left(\escpr{\nabla_{Z} ( \nabla_U U ),T} +2 \escpr{J(\nabla_U U),Z}  + 2 \escpr{J(U),\nabla_{Z} U}\right)  T \times E\\
&\qquad + 4 \escpr{J(U), Z} T \times \nabla_E U\\
&\qquad + \left(\escpr{\nabla_{E} ( \nabla_U U ),T} +2 \escpr{J(\nabla_U U),E}  + 2 \escpr{J(U),\nabla_{E} U}\right) Z \times T \\
&\qquad  + 2 \left( \escpr{\nabla_E U,T} +2 \escpr{J(U),E} \right) \nabla_ Z U \times T -|N_h| \nabla_Z (\nabla_U U) \times T.
\end{align*}
We set $\pi_K(\nu_h)=\pi_Z Z+ \pi_{\nu} \nu_h$, where $\pi_Z=\escpr{\pi(\nu_h), Z}$ and $\pi_{\nu}=\escpr{\pi(\nu_h), \nu_h}$.  Notice that $T\times \nu_h=-Z$, $Z\times T=-\nu_h$ and $\escpr{\pi(\nu_h),W \times T}=\escpr{J(\pi(\nu_h)), W}$ for each vector field $W$, then a straightforward computation shows that 
\begin{equation}
    \label{eq:I=A+B}
\textbf{I}=\escpr{\pi(\nu_h),\dfrac{\nabla^2}{ds^2}\Big|_{s=0} V(s)}=\mathbf{A}+\mathbf{B}
\end{equation}
where
\begin{equation}
\label{eq:A}
\begin{aligned}
    \mathbf{A}&=-2 \escpr{N,T} \escpr{J(U),\nabla_{Z} U} \pi_Z-4 \escpr{J(U), Z} \escpr{J(\pi(\nu_h)), \nabla_E U}\\
&-2 \pi_{\nu} \escpr{J(U),\nabla_E U}+4\escpr{J(\pi(\nu_h)), \nabla_Z U}  \escpr{J(U),E}
\end{aligned}
\end{equation}
and 
\begin{equation}
    \label{eq:B}
\begin{aligned}
    \mathbf{B}&=-\escpr{N,T} Z(\escpr{\nabla_U U,T}) \pi_Z -E(\escpr{\nabla_U U,T}) \pi_{\nu}\\
& \quad-  2\escpr{N,T} \escpr{J(\nabla_U U),\pi(\nu_h)} - |N_h| \escpr{J(\pi(\nu_h)), \nabla_Z \nabla_U U}
\end{aligned}
\end{equation}
Since $S$ is area-stationary, by equation \eqref{eq:first0} in Proposition \ref{prop:fvf}  we have that 
\begin{equation}
\label{eq:1vfin2vf}
\begin{aligned}
    0=\dfrac{d}{ds}\Big|_{s=0} A_K( \varphi_s(S))&=\int_S \bigg[-\escpr{N,T} Z(\escpr{U,T}) \pi_Z -E(\escpr{U,T}) \pi_{\nu}\\
& \quad-  2\escpr{N,T} \escpr{J(U),\pi(\nu_h)} - |N_h| \escpr{J(\pi(\nu_h)), \nabla_Z U}\bigg] dS,\\
\end{aligned}
\end{equation}
for every  $U$  compactly supported $C^1$ vector field. Taking into account the first variation formula \eqref{eq:1vfin2vf} induced by the $C^1$ horizontal vector field  $\nabla_U U$ we get 
\[
\int_S \mathbf{B}=0
\]
Thus we obtain 
\begin{equation}
    \label{eq:intI}
\int_S \textbf{I}=\int_S \mathbf{A}.
\end{equation}

On the other hand, by Lemma \ref{rk:sdpi} and by equation \eqref{eq:nablapi} in Remark \ref{rk:nablapi}, we obtain 
 \[
\textbf{II}=-\escpr{\dfrac{\nabla^2}{ds^2}\Big|_{s=0} \pi(V(s)), \nu_h}= \escpr{\dfrac{\nabla}{ds}\Big|_{s=0} \left( \tfrac{V(s)}{|V(s)|} \right) , (d\pi)_{\nu_h}^* Z} \escpr{ \dfrac{\nabla}{ds}\Big|_{s=0}  V(s) , Z}.
 \]
Then \cite[Lemma 4.3]{MR4314055} yields 
\[
\textbf{II}=\dfrac{ 1}{\kappa \, |N_h|} \escpr{ \dfrac{\nabla}{ds}\Big|_{s=0}  V(s) , Z}^2,
\]
where  $\kappa=\kappa(\pi_K(\nu_h))$ is the positive constant curvature of $\partial K$ evaluated at $\pi_K(\nu_h)$, that is constant along the characteristic curves by Theorem  \ref{th:straightfoliation}. Thanks to \eqref{eq:nUV} we have
 \begin{align*}
 \escpr{ \dfrac{\nabla}{ds}\Big|_{s=0}  V(s) , Z}&=- 2\escpr{N,T}\escpr{J(U),Z}-|N_h| \escpr{\nabla_Z U,\nu_h}\\
  &= 2\escpr{N,T}\escpr{U,\nu_h}-|N_h| \escpr{\nabla_Z U,\nu_h}.
 \end{align*}
 Setting $f= \escpr{U, \nu_h}$ and $g=\escpr{U,Z}$ we get that $\mathbf{A}+\textbf{II}$ is equal to
 \begin{align*}
& 2\escpr{N,T} \pi_Z( g Z(f) +f Z(g))+ 4 f \pi_Z(E(f)-g\theta(E))-2 f\pi_{\nu}(E(g)+f\theta(E))\\
&\qquad -2g \pi_{\nu}(E(f)-g \theta(E)) -4 g \escpr{N,T} \pi_{\nu} Z(g)\\
&\qquad+ \dfrac{1}{\kappa \, |N_h|} (2\escpr{N,T}f -|N_h| Z(f))^2.
 \end{align*}
Notice that
\begin{equation} 
\label{eq:svpt}
\begin{aligned}
\textbf{II}&=\dfrac{1}{  \kappa \, |N_h|} (2\escpr{N,T}f -|N_h| Z(f))^2\\
&= \dfrac{1}{ \kappa \, |N_h|}(4\escpr{N,T}^2 f^2 - 4 \escpr{N,T} |N_h| f Z(f)+|N_h|^2 Z(f)^2).
\end{aligned}
\end{equation}
The second term of  \eqref{eq:svpt} can be written as
\begin{align*}
4 \dfrac{\escpr{N,T} |N_h|}{\kappa \, |N_h| } f Z(f)&=2 |N_h| \dfrac{\escpr{N,T} }{\kappa \, |N_h|} Z(f^2)\\
&=2 |N_h| \left(Z \left( \dfrac{\escpr{N,T} }{\kappa \, |N_h|}f^2\right)- Z\left(\dfrac{\escpr{N,T}}{\kappa \, |N_h|}\right) f^2 \right).
\end{align*}
 Then, setting $h=\tfrac{\escpr{N,T}f^2}{\kappa |N_h|}$ in Lemma \ref{rk:integration by parts}, we obtain that the integrals of the first and second term in \eqref{eq:svpt}  are equal to
 \[
 \int_S  \left(4\,\dfrac{\escpr{N,T}^2}{\kappa |N_h|^2} f^2-2 \dfrac{\escpr{N,T} }{\kappa \, |N_h|} Z(f^2) \right) |N_h| \, dS=2 \int_S Z\left(\dfrac{\escpr{N,T}}{\kappa \, |N_h|}\right) f^2  \ |N_h| \, dS.
 \]
 The integral of the third summand in  \eqref{eq:svpt} is equal to 
 \[
 \int_S Z(f)^2 \, \dfrac{|N_h|}{\kappa} \, dS.
 \]
 Hence we obtain 
 \begin{equation}
     \label{eq:intII}
 \int_S \textbf{II} \, dS=\int_S Z(f)^2 \, \dfrac{|N_h|}{\kappa} \, dS+  2 \int_S Z\left(\dfrac{\escpr{N,T}}{\kappa \, |N_h|}\right) f^2  \ |N_h| \, dS.
 \end{equation}
 
Finally we deal with 
\begin{align*}
\mathbf{A}&= 2\escpr{N,T} \pi_Z( g Z(f) +f Z(g))+ 4 f \pi_Z(E(f)-g\theta(E))-2 f\pi_{\nu}(E(g)+f\theta(E))\\
& -2g \pi_{\nu}(E(f)-g \theta(E)) -4 g \escpr{N,T} \pi_{\nu} Z(g)
\end{align*}
By equation \eqref{eq:thetaE} in Lemma \ref{rk:integration by parts} we have that
\begin{align*}
&2\pi_{\nu}(g^2 \theta(E)-2 \escpr{N,T} g Z(g))\\
&\quad=2\pi_{\nu} |N_h| \left( g^2(2 \dfrac{\escpr{N,T}^2}{|N_h|^2} -Z(\dfrac{\escpr{N,T}}{|N_h|}))-\dfrac{\escpr{N,T}}{|N_h|}Z(g^2)\right)\\
&\quad=2 \norm{N_h}_* \left(2 g^2 \dfrac{\escpr{N,T}^2}{|N_h|^2}-Z\left(\dfrac{\escpr{N,T}}{|N_h|}g^2\right)  \right)
\end{align*}
a.e.~in $S$. Then,
by Lemma \ref{rk:piZN} the integral over $S$ of the previous term is equal to zero.
Therefore we have 
\begin{align*}
\mathbf{A}= 2\escpr{N,T} \pi_Z Z(gf) + 4 f \pi_Z(E(f)-g\theta(E))-2 f^2 \pi_{\nu}\theta(E)  -2 \pi_{\nu} E(gf)
\end{align*}
On the one hand we have 
\[
\int_{S} \pi_{\nu} E(gf)+ \pi_Z \theta (E) gf =0,
\]
by equation \eqref{eq:intSE} in Lemma \ref{rk:piZN}. On the other hand using equation \eqref{eq:intbyparts2} in Lemma \ref{rk:piZN} and equation \eqref{eq:thetaE} in Lemma \ref{rk:integration by parts}  we have 
\begin{align*}
&2\int_{S} \pi_{Z} \left(\escpr{N,T} Z(gf)-gf \theta(E) \right)=\\
&2\int_{S}( Z\left(\dfrac{\escpr{N,T}}{|N_h|}  \pi_{Z} gf \right)- 2\dfrac{\escpr{N,T}^2}{|N_h|^2}  \pi_{Z} gf )|N_h| dS=0.
\end{align*}
Hence we gain 
\begin{equation}
    \label{eq:A-2}
\begin{aligned}
\mathbf{A}&= 2 \pi_Z E(f^2) -2 f^2 \pi_{\nu}\theta(E)=-2 f^2(E(\pi_Z)+\pi_{\nu} \theta(E))\\
&=-2 f^2(\escpr{\nabla_E \pi(\nu_h),Z}- \pi_{\nu} \theta(E)+\pi_{\nu} \theta(E))\\
&=-2f^2 \escpr{(d\pi)_{\nu_h} ( \nabla_E \nu_h), Z}=-2f^2 \dfrac{\theta(E)}{\kappa(\pi(\nu_h))},
\end{aligned}
\end{equation}
a.e.~in $S$, where the last inequality follows from Lemma~4.3 in  \cite{MR4314055}. Finally, by \eqref{eq:thetaE} in Lemma \ref{rk:integration by parts} and \eqref{eq:intI}, \eqref{eq:intII} and \eqref{eq:A-2} we obtain
\begin{equation}
\label{eq:svf}
\int_S \textbf{I}+\textbf{II}= \int_S \mathbf{A}+\textbf{II}=\int_S \left( Z(f)^2+4 \left( Z\left( \dfrac{\escpr{N,T}}{|N_h|}\right)-  \dfrac{\escpr{N,T}^2}{|N_h|^2}\right) f^2 \right) \dfrac{|N_h|}{\kappa(\pi({\nu_h}))} dS.
\end{equation}
In the last equation we use that $Z(\pi(\nu_h))=0$, therefore $\kappa(\pi({\nu_h}))$ is constant along the characteristic curves.
\end{proof}

\begin{lemma}
\label{rk:sdpi}
Following the previous notation we have
\[
 \escpr{\dfrac{\nabla}{ds} \dfrac{\nabla}{ds}\Big|_{s=0} \pi(V(s)), V(s)}=-\escpr{\dfrac{\nabla}{ds}\Big|_{s=0} \pi(V(s)), Z} \escpr{ \dfrac{\nabla}{ds}\Big|_{s=0} V(s), Z}.
\]
\end{lemma}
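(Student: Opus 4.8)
The plan is to obtain the identity purely by differentiating once more the first-order relation that characterizes $\pi$ as a support-type map. The starting point is the content of Remark~3.3 in \cite{2020arXiv200704683P}, which I read as the pointwise identity $\escpr{\tfrac{\nabla}{ds}\pi(V(s)), V(s)}=0$, valid for every $s$: it is precisely this vanishing that collapses the first variation in Proposition~\ref{prop:fvf}. Since the scalar function $s\mapsto\escpr{\tfrac{\nabla}{ds}\pi(V(s)), V(s)}$ is constantly zero, I would differentiate it in $s$ and use that $\nabla$ is a metric connection to get
\[
0=\frac{d}{ds}\escpr{\tfrac{\nabla}{ds}\pi(V(s)), V(s)}=\escpr{\tfrac{\nabla}{ds}\tfrac{\nabla}{ds}\pi(V(s)), V(s)}+\escpr{\tfrac{\nabla}{ds}\pi(V(s)), \tfrac{\nabla}{ds}V(s)},
\]
which rearranges into $\escpr{\tfrac{\nabla}{ds}\tfrac{\nabla}{ds}\pi(V(s)), V(s)}=-\escpr{\tfrac{\nabla}{ds}\pi(V(s)), \tfrac{\nabla}{ds}V(s)}$.

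The remaining step is to evaluate the right-hand side at $s=0$ and replace the factor $\tfrac{\nabla}{ds}|_{s=0}\pi(V)$ by its $Z$-component. First I would observe that $\pi(V(s))$ is a horizontal vector field along $s\mapsto\varphi_s(p)$; writing it in the parallel frame $\{X,Y\}$ and using $\nabla X=\nabla Y=0$, its covariant derivative $\tfrac{\nabla}{ds}\pi(V(s))$ is again horizontal. Together with the first-order identity, at $s=0$ this vector is orthogonal to $V(0)=|N_h|\,\nu_h$, hence to $\nu_h$. As $\{Z,\nu_h\}$ is an orthonormal basis of $\mathcal{H}$, this forces
\[
\tfrac{\nabla}{ds}\Big|_{s=0}\pi(V)=\escpr{\tfrac{\nabla}{ds}\big|_{s=0}\pi(V), Z}\,Z.
\]
Plugging this into $\escpr{\tfrac{\nabla}{ds}|_{s=0}\pi(V),\,\tfrac{\nabla}{ds}|_{s=0}V}$ and pulling out the scalar yields exactly $\escpr{\tfrac{\nabla}{ds}|_{s=0}\pi(V), Z}\,\escpr{\tfrac{\nabla}{ds}|_{s=0}V, Z}$, and combining with the displayed rearrangement gives the asserted formula.

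The geometric heart is transparent: $\pi(V)$ traces $\partial K$ with outer normal $V/|V|$, so its velocity is tangent to $\partial K$, i.e.\ proportional to $Z$, and the vanishing $\escpr{\tfrac{\nabla}{ds}\pi(V),V}=0$ is just the degree-zero homogeneity of $\pi$ expressed through Remark~3.3. The one point I expect to require genuine care is legitimizing the \emph{second} covariant differentiation of $\pi(V(s))$: since $K\in C^2_+$ only gives $\pi_K\in C^1$ and the $C^2$ field $U$ makes $s\mapsto V(s)$ merely $C^1$, the quantity $\tfrac{\nabla}{ds}\tfrac{\nabla}{ds}\pi(V)$ must be interpreted with the regularity available, and the differentiation of the identity $\escpr{\tfrac{\nabla}{ds}\pi(V),V}=0$ justified accordingly (the same smoothing/approximation philosophy used for the first variation applies). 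Once that bookkeeping is fixed, the algebra is immediate and no further obstacle arises.
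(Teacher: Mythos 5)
Your proof is correct and rests on the same two ingredients as the paper's --- the orthogonality $\escpr{\tfrac{\nabla}{ds}\pi(V(s)),V(s)}=0$ from Remark~3.3 of \cite{2020arXiv200704683P} and the horizontality of $\tfrac{\nabla}{ds}\pi(V(s))$ coming from $\nabla X=\nabla Y=0$ --- but it is organized differently. The paper proves the lemma by writing the decomposition $\tfrac{\nabla}{ds}\pi(V(s))=f(s)\,J\bigl(V(s)/|V(s)|\bigr)$ for \emph{every} $s$, differentiating that vector identity, and killing the $f'(s)$ term against $V(s)$ using $\nabla J=0$ together with the skew-symmetry of $J$; the identity $\escpr{\tfrac{\nabla}{ds}\tfrac{\nabla}{ds}\pi(V),V}=-\escpr{\tfrac{\nabla}{ds}\pi(V),\tfrac{\nabla}{ds}V}$, which is your first display, is derived separately in the paper as equation \eqref{eq:nabnabpi} inside the proof of Theorem~\ref{th:secondvf} and then combined with the lemma. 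You instead take that identity as the starting point of the lemma itself and invoke the $Z$-proportionality only at $s=0$; this is slightly shorter and dispenses with $\nabla J=0$ in the lemma. The trade-off is essentially nil: both routes require exactly the same regularity, namely the existence of $\tfrac{\nabla}{ds}\tfrac{\nabla}{ds}\pi(V(s))$ (equivalently, differentiability of $f$), a caveat you rightly flag and which the paper also leaves implicit, to be handled by the same mollification used for the first variation.
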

\begin{proof}
By Remark~3.3 in \cite{2020arXiv200704683P} we have  $\escpr{\dfrac{\nabla}{ds} \pi(V(s)), V(s)}=0$. Then 
\[
\dfrac{\nabla}{ds} \pi(V(s))=f(s) J(\tfrac{V(s)}{|V(s)|}),
\]
where $f(s)=\escpr{\dfrac{\nabla}{ds} \pi(V(s)), J(\tfrac{V(s)}{|V(s)|})}$. Since 
\[
 \dfrac{\nabla}{ds} \dfrac{\nabla}{ds} \pi(V(s))= \dfrac{d}{ds} f(s) J(\tfrac{V(s)}{|V(s)|})+ f(s) \dfrac{\nabla}{ds} J(\tfrac{V(s)}{|V(s)|})
\]
and $\nabla J=0$ we obtain 
\[
 \escpr{\dfrac{\nabla}{ds} \dfrac{\nabla}{ds} \pi(V(s)), V(s)}= f(s)\escpr{ \dfrac{\nabla}{ds} J(\tfrac{V(s)}{|V(s)|}), V(s)}=- f(s) \escpr{ \dfrac{\nabla}{ds} \left( \tfrac{V(s)}{|V(s)|} \right) , J(V(s))}.
\]
Evaluating  at $s=0$ we gain
\[
 \escpr{\dfrac{\nabla}{ds} \dfrac{\nabla}{ds}\Big|_{s=0} \pi(V(s)), V(s)}=-\escpr{\dfrac{\nabla}{ds}\Big|_{s=0} \pi(V(s)), Z} \escpr{ \dfrac{\nabla}{ds}\Big|_{s=0} V(s), Z},
\]
since $V(0)= |N_h| \nu_h$.
\end{proof}

\begin{remark}
\label{rk:nablapi}
Letting
\[
\pi(V(s))= \pi_1(V(s)) X_{\ga(s)} + \pi_2(V(s)) Y_{\ga(s)},
\]
and noticing that $\nabla X=\nabla Y=0$
we gain
\[
\dfrac{\nabla}{ds}\Big|_{s=0} \pi(V(s))= \dfrac{d}{ ds}\Big|_{s=0} \pi_1(V(s)) X_{\ga(0)} + \dfrac{d}{ ds}\Big|_{s=0} \pi_2(V(s)) Y_{\ga(0)}.
\]
Setting $\nu_h=a X+ b Y$ 
 we obtain 
\begin{equation}
\label{eq:nablapi}
\dfrac{\nabla}{ds}\Big|_{s=0} \pi(V(s))= (d \pi)_{(a,b)} \left(\dfrac{\nabla}{ds}\Big|_{s=0} \tfrac{V(s)}{|V(s)|}  \right)    
\end{equation}
where 
\[
 (d \pi)_{(a,b)}=\begin{pmatrix} \dfrac{\partial \pi_1 }{\partial a}(a,b) & \dfrac{\partial \pi_1}{\partial b} (a,b)\\[3mm] \dfrac{\partial \pi_2}{\partial a} (a,b) & \dfrac{\partial \pi_2}{\partial b} (a,b) \end{pmatrix}.
\]
By Corollary 1.7.3 in \cite{MR3155183} $\pi_K= \nabla h$ where $h$ is a $C^2$ function, thus by Schwarz's theorem  the Hessian $\text{Hess}_{(a,b)}(h)=(d \pi)_{(a,b)}$ is symmetric.
\end{remark}

\begin{lemma}\label{rk:integration by parts} Let $S \subset \hh^1$ be a $(X,Y)$- Lipschitz  surface. Let $h$ be a compactly supported function on $S$, differentiable in the $Z$-direction. Then we have 
\begin{equation}
\label{eq:intbyparts}
\int_{S} \left( Z(h) - 2 \dfrac{\escpr{N,T}}{|N_h|} h \right) |N_h| dS=0
\end{equation}
and 
\begin{equation}
\label{eq:thetaE}
\theta(E)=-|N_h| Z\left( \dfrac{\escpr{N,T}}{|N_h|}\right)+ 2|N_h| \dfrac{\escpr{N,T}^2}{|N_h|^2}
\end{equation}
a.e. in $S$, where $\theta(E)=\escpr{\nabla_E \nu_h, Z}$.
\end{lemma}
\begin{proof}
Following Proposition~1.20 in \cite[]{MR657581} or Remark~6.1 \cite{MR3044134}  we approximate the $(X,Y)$-Lipschitz surface $S=\{p \in \hh^1 : f(p)=0\}$ by a family of smooth surfaces  $S_j=\{p \in \hh^1  :  f_j(p)=0\}$, where $f_j=\rho_j * f$ and  $\rho_j$ are the standard Friedrichs’ mollifiers, that converges to $S$ on compact subsets of $S$. Let $Z^j$, $N^j$ and $E^j$ relative to $S_j$. Then we have
\[
\divv(|N_h^j| h Z^j)=|N^j_h| Z^j(h)-\escpr{N^j,T} h \left( -\dfrac{Z^j(|N_h^j|) }{\escpr{N^j,T}} + |N_h^j| \theta(E^j) + 2 |N_h^j|^2\right).
\]
Using  $-|N_h^j|^{-1}Z^j(\escpr{N^j,T})=\escpr{N^j,T}^{-1} Z^j(|N_h^j|)$, $|N_h^j|^{-1}Z^j(\escpr{N^j,T})- 2\escpr{N^j,T}^2+ |N^j_h| \theta(E^j)=0$ and the divergence theorem  we get
\[
\int_{S_j}  |N_h^j| Z^j(h) - 2 {\escpr{N^j,T}} h   \, dS_j=0.
\]
Then, passing to the limit when $j\to +\infty$ we obtain \eqref{eq:intbyparts}. Since $S_j$ are smooth a straightforward computation shows that
\[
\theta(E^j)=-|N_h^j| Z^j\left( \dfrac{\escpr{N^j,T}}{|N^j_h|}\right)+ 2|N^j_h| \dfrac{\escpr{N^j,T}^2}{|N^j_h|^2}.
\]
Passing to the limit when $j\to +\infty$ we have $E^j \to E$ a.e.~in $S$, since $S$ is Euclidean Lipschitz. Therefore we obtain that \eqref{eq:thetaE} holds a.e. in $S$.
\end{proof}

\begin{lemma}\label{rk:piZN}
Let $S \subset \hh^1$ be an area-stationary $(X,Y)$-Lipschitz surface. Let $h$ be a compactly supported function in $S$, differentiable in the $Z$ direction then we have 
\begin{equation}
\label{eq:intbyparts2}
\int_{S} \left( Z(h) - 2 \dfrac{\escpr{N,T}}{|N_h|} h \right) \norm{N_h}_* dS=0.
\end{equation}
Moreover, there holds 
\begin{equation}
\label{eq:intSE}
\int_S \pi_{\nu} E(h)+\pi_Z \theta(E) h=0.
\end{equation}
\end{lemma}
\begin{proof}
Let $\pi_{\nu}=\escpr{\pi(\nu_h),\nu_h}=\norm{\nu_h}_*$, then $\norm{N_h}_*=|N_h|\pi_{\nu}$. Since $S$ is an area-stationary surface,  Theorem \ref{th:straightfoliation} implies that $\nu_h$ is constant in the $Z$ direction, thus in particular we have 
$
Z(\pi_{\nu})=0.
$
Therefore, applying the same divergent argument of the proof of Lemma \ref{rk:integration by parts} we obtain
\[
\int_{S} \left( Z(h) - 2 \dfrac{\escpr{N,T}}{|N_h|} h \right) \norm{N_h}_* dS=0.
\]
Always following \cite[Proposition 1.20]{MR657581} or  \cite[Remark 6.1]{MR3044134}  we approximate the $(X,Y)$-Lipschitz surface $S=\{p \in \hh^1  : f(p)=0\}$ by a family of smooth surfaces  $S_j=\{p \in \hh^1  : f_j(p)=0\}$, where $f_j=\rho_j * f$ and  $\rho_j$ are the standard Friedrichs’ mollifiers, that converges to $S$ on compact subsets of $S $. Let $Z^j$, $N^j$ and $E^j$ relative to $S_j$.  Using \cite[Remark 3.3]{2020arXiv200704683P} it is easy to prove that $E(\pi_{\nu^j})=\pi_{Z^j} \theta(E^j)$. Thus,  by Proposition \ref{pr:divt} we gain  
\begin{equation}
\label{eq:intbypartE}
\int_{S_j} \pi_{\nu^j} E^j(h)+ \pi_{Z^j} \theta(E^j) h \ dS_j=-\int_{S_j} \escpr{N^j,T} \theta(Z^j) h \ dS_j.
\end{equation}
Since $S$ is area-stationary, we get $H_K=0$ and Proposition 4.2 in \cite{MR4314055} implies $H_D=\escpr{\nabla_{Z} \nu_h, Z}=0$.
Then $\theta(Z^j)= \escpr{\nabla_{Z^j} \nu_h^j, Z^j}\to \escpr{\nabla_{Z} \nu_h, Z}=0$ and, passing to the limit in \eqref{eq:intbypartE} when $j\to +\infty$, we obtain \eqref{eq:intSE}.
\end{proof}

\section{The Bernstein's problem for $(X,Y)$-Lipschitz surfaces}
\label{sec:bernstein}

\begin{definition}
We say that a complete oriented area-stationary $(X,Y)$-Lipschitz surface $S \subset \hh^1$ is stable if inequality
\begin{equation}
\label{eq:stable}
\int_S \left( Z(f)^2+4 \left( Z\left( \dfrac{\escpr{N,T}}{|N_h|}\right)-  \dfrac{\escpr{N,T}^2}{|N_h|^2}\right) f^2 \right) \dfrac{|N_h|}{\kappa(\pi({\nu_h}))} dS \ge0
\end{equation}
holds for any continuous function $f$ on $S$ with compact support such that $Z(f)$ exists and is continuous.
\end{definition}

The following lemma is proven in \cite[page 45]{MR2333095}.

\begin{lemma}
\label{lm:ASCV}
Let $A,B \in \rr$ be such that $A^2\le2B$ and set $h(s):=1+As +B s^2 /2$. If
\[
\int_{\rr} \phi'(s)^2 h(s) ds \ge (2B-A^2) \int_{\rr} \phi(s)^2 \dfrac{1}{h(s)} ds
\]
for each $\phi \in C_0^1(\rr)$ then $2B=A^2$.
\end{lemma}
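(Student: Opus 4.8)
The plan is to argue by contraposition: setting $\lambda:=2B-A^2$, I assume $\lambda>0$ and produce a test function $\phi\in C_0^1(\rr)$ that violates the hypothesized inequality. First observe that $A^2\le 2B$ forces $B\ge 0$, and if $B=0$ then $A=0$ and $\lambda=0$ already; so we may assume $B>0$. Completing the square gives $h(s)=\tfrac{B}{2}\big(s+\tfrac{A}{B}\big)^2+\tfrac{\lambda}{2B}$, so after the harmless translation $\sigma=s+A/B$ one has $h=\tfrac{B}{2}\sigma^2+\tfrac{\lambda}{2B}\ge \tfrac{\lambda}{2B}>0$. In particular $h$ is smooth, strictly positive and bounded below, and $1/h$ is integrable on $\rr$.

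The key step is the change of variables $\tau(\sigma)=\int_0^{\sigma}\tfrac{d\sigma'}{h(\sigma')}$, which satisfies $d\tau/d\sigma=1/h>0$ and is therefore a smooth increasing diffeomorphism of $\rr$ onto a \emph{bounded} interval $(-\ell,\ell)$, where $\ell=\int_0^{+\infty}d\sigma/h=\pi/\sqrt{\lambda}$ by the standard formula $\int_{\rr}(p\sigma^2+q)^{-1}d\sigma=\pi/\sqrt{pq}$ with $p=B/2$, $q=\lambda/(2B)$, so that $pq=\lambda/4$. Viewing $\phi$ as a function of $\tau$ and using $d\phi/d\sigma=h^{-1}\phi_\tau$ together with $d\sigma=h\,d\tau$, both sides of the inequality lose their weights:
\[
\int_{\rr} h\,\phi'(s)^2\,ds=\int_{-\ell}^{\ell}\phi_\tau^2\,d\tau,\qquad \int_{\rr} h^{-1}\phi^2\,ds=\int_{-\ell}^{\ell}\phi^2\,d\tau.
\]
Since $\sigma\mapsto\tau$ carries $C_0^1(\rr)$ bijectively onto $C_0^1((-\ell,\ell))$ (a compact subset of $\rr$ maps to a compact subset of the open interval, and conversely), the hypothesis is \emph{equivalent} to the one–dimensional Poincaré inequality $\int_{-\ell}^{\ell}(\psi')^2\,d\tau\ge \lambda\int_{-\ell}^{\ell}\psi^2\,d\tau$ for every $\psi\in C_0^1((-\ell,\ell))$.

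I then invoke the sharp Poincaré constant on a bounded interval. The best constant $C$ in $\int_{-\ell}^{\ell}(\psi')^2\ge C\int_{-\ell}^{\ell}\psi^2$ over $C_0^1((-\ell,\ell))$ is the first Dirichlet eigenvalue of $-d^2/d\tau^2$ on an interval of length $2\ell$, namely $C=(\pi/(2\ell))^2=\lambda/4$, attained in the limit by functions approximating the eigenfunction $\cos\!\big(\pi\tau/(2\ell)\big)$. Because $\lambda/4<\lambda$ whenever $\lambda>0$, there exists $\psi\in C_0^1((-\ell,\ell))$ with $\int_{-\ell}^{\ell}(\psi')^2<\lambda\int_{-\ell}^{\ell}\psi^2$, and its pull-back $\phi$ violates the hypothesized inequality. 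This contradiction forces $\lambda=0$, i.e. $2B=A^2$.

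I expect the main work to be bookkeeping rather than conceptual. The delicate points are checking that $\tau(\sigma)$ is a global diffeomorphism onto a bounded interval and computing its exact length $2\ell=2\pi/\sqrt{\lambda}$ (this is what makes the sharp constant $\lambda/4$ come out \emph{strictly} below $\lambda$), and verifying that compact support in the $s$–variable corresponds precisely to compact support strictly inside $(-\ell,\ell)$, so that the reduction to the clean Dirichlet eigenvalue problem on a bounded interval is fully legitimate.
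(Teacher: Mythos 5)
Your proof is correct. Note that the paper does not actually prove this lemma: it only defers to \cite[page~45]{MR2333095}, so your argument is a self-contained replacement rather than a variant of a proof in the text. Your route --- completing the square to get $h(\sigma)=\tfrac{B}{2}\sigma^2+\tfrac{\lambda}{2B}$ with $\lambda=2B-A^2$, passing to the conformal parameter $\tau=\int_0^\sigma h^{-1}$, which maps $\rr$ onto an interval of finite length $2\ell=2\pi/\sqrt{\lambda}$ and strips the weights from both integrals, and then comparing with the sharp Dirichlet--Poincar\'e constant $(\pi/(2\ell))^2=\lambda/4<\lambda$ --- is clean, and all the computations (the value of $\ell$, the transformation of the two integrals, the bijection between $C_0^1(\rr)$ and $C_0^1((-\ell,\ell))$) check out. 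The one step you should make explicit is the approximation: the extremal $\cos(\pi\tau/(2\ell))$ is not compactly supported in the open interval, so you need the standard fact that the infimum of the Rayleigh quotient over $C_0^1((-\ell,\ell))$ equals the first Dirichlet eigenvalue, by density of $C_c^\infty$ in $H_0^1$. If you want to avoid quoting the eigenvalue altogether, observe that your change of variables identifies the near-extremal test function in the original variable as $\phi= h^{-1/2}$ times a cutoff; for $\phi=h^{-1/2}$ both integrals are elementary, namely $\int_{\rr}(\phi')^2h\,ds=\pi B/(2\sqrt{\lambda})$ and $\lambda\int_{\rr}\phi^2h^{-1}\,ds=2\pi B/\sqrt{\lambda}$, so the hypothesized inequality fails by exactly the factor $4$ and a cutoff at large $|s|$ (legitimate since both integrals converge) finishes the contradiction.
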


\begin{theorem}[Bernstein's theorem]
\label{thm:bernstein} Let $K\in C^2_+$ be a convex body with $0 \in \intt(K)$.
Let $S \subset \hh^1$ be a complete, connected and stable $(X,Y)$-Lipschitz surface. Then $S$ is a vertical plane.
\end{theorem}

\begin{proof}
First of all we have that $S$ is an $\hh$-regular surface by Theorem \ref{th:straightfoliation}. 
Let $p$ in $S$. Since $S$ is $(X,Y)$-Lipschitz, by Theorem \ref{th:iftLip}, there exist an open ball $B_r(p)$ and a Lipschitz function $u:D \to \rr$ such that $S \cap B_r(p)=\text{Gr}(u)$ where $\text{Gr}(u)=\{(x,u(x,y),t-xu(x,t))\in \hh^1 \ : \ (x,t) \in D\}$.  
 Let $(0,0) \in D$ be the projection of $p$ to the $xt$-plane. On $D$ we consider the coordinates around $(0,0)$ furnished by $G(s,\eps)$ defined in Lemma \ref{lm:teps>0}. Let $I$ be a small interval containing 0, then $\eps \in I $ and $s \in ]-r, r[$. Since $S$ is complete by the Hopf-Rinow Theorem each geodesic (in particular the straight lines in the $Z$-direction) can be indefinitely extended along any direction, thus the open interval $]-r,r[$ extend  to $\rr$.
Notice that $\bar{\ga}_{\eps}(s)$ is the integral curve of $Z$, thus $Z(f)=\partial_s(f)$. Hence, taking into account that $(u_x+2uu_t)(s)$ is constant along $\bar{\ga}_\eps$ and equal to $b(\eps)$, the stability condition \eqref{eq:stable} is equivalent to 
\begin{equation}
\label{eq:stability2}
\int_{I} \int_{\rr} \bigg( (\partial_s f)^2- 4\bigg(\dfrac{\escpr{N,T}^2}{|N_h|^2} - \partial_s \bigg(\dfrac{\escpr{N,T}}{|N_h|}\bigg) \bigg) f^2\bigg)\dfrac{ \partial t_{\eps}}{\partial \eps} \dfrac{\sqrt{1+b(\eps)^2}}{\kappa(\pi({\nu_h}))} \, ds \,d\eps \ge 0,
\end{equation}
for any continuous function $f$ on $S$ with compact support such that $Z(f)$ exists and is continuous.

Since $\escpr{N,T}/{|N_h|}$ solves the equation \eqref{eq:ODECodazzi} with initial condition $y(0)=a'(\eps)/2$ and $y'(0)=a'(\eps)^2/2-b'(\eps)$, by \eqref{eq:uabq} we get 
\[
\dfrac{\escpr{N,T}^2}{|N_h|^2}- \left(\dfrac{\escpr{N,T}}{|N_h|}\right)'=\dfrac{b'(\eps)-\tfrac{a'(\eps)^2}{4}}{(1-a'(\eps)s+b'(\eps)s^2)^2}.
\]
Therefore, computing $\ptl t_\eps/\ptl\eps$ from \eqref{eq:taylorex}, we obtain that \eqref{eq:stability2} is equivalent to
\[
\int_{I} \int_{\rr} \bigg( (1-a'(\eps)s+b'(\eps)s^2) (\partial_s f)^2- \dfrac{4b'(\eps)-a'(\eps)^2}{(1-a'(\eps)s+b'(\eps)s^2)} f^2 \bigg) \dfrac{\sqrt{1+b(\eps)^2}}{\kappa(\pi({\nu_h}))} \, ds\, d\eps \ge 0.
\]

Let $\eta: \rr \to \rr$ be a positive function compactly supported in $\rr$ and for $\rho>0$ we consider the family $\eta_{\rho}(x)=\rho^{-1} \eta(x/\rho)$, that weakly converge to the Dirac delta distribution. Putting  the test functions $\eta_{\rho}(x-\eps) \psi(s)$, where $\psi \in C^1_0(\rr)$, in  the previous equation and letting $\rho\to 0$ we get
\[
\int_{\rr}  (1-a'(\eps)s+b'(\eps)s^2) (\psi'(s))^2 ds \ge (4b'(\eps)-a'(\eps)^2)  \int_{\rr} \dfrac{\psi(s)^2}{(1-a'(\eps)s+b'(\eps)s^2)}\,   ds,
\]
for a.e. $\eps$ since  $\kappa(\pi(\nu_h))$ is a positive constant along the horizontal straight lines for each $\eps$ (since $\nu_h$ is constant along such horizontal straight lines) and $\sqrt{1+b(\eps)^2}$ is a positive constant on $\bar{\ga}_\eps$.

Setting $A=-a'(\eps)$, $B=2 b'(\eps)$ and $h(s):=1+As +B s^2/2$, we obtain 
\[
\int_{\rr} h(s) \psi'(s)^2 ds \ge (2B-A^2) \int_{\rr} \dfrac{\psi^2(s)}{h(s)} ds
\]
for each $\psi \in C^1_0 (\rr)$. Assume that $2B-A^2 \ge0$ then by Lemma \ref{lm:ASCV} we get that $2B=A^2$, then $4b'(\eps)-a'(\eps)^2=0$. Therefore by Lemma \ref{lm:ODECodazzi}  we obtain $\escpr{N,T} \equiv 0$, $a'(\eps)=b'(\eps)=0$ a.e. in $\eps$. On the other hand, if $2B-A^2 <0$ then directly by Lemma \ref{lm:ODECodazzi}  we obtain $\escpr{N,T} \equiv 0$, $a'(\eps)=b'(\eps)=0$ a.e. in $\eps$. Hence  $a(\eps)$ and $b(\eps)$ are constant functions in $\eps$ and 
\[
t_{\eps}(s)=\eps+a s+ b s^2,
\]
for some constant $a,b \in \rr$.
Since $t'_{\eps}(s)=2u(s,t_{\eps}(s))=2 \tilde{u}(s,\eps)$ we get 
$
\tilde{u}(s,\eps)= a/2 + b s
$, thus $\tilde{u}$ is an affine function.  Hence $S$ is locally a strip contained  in a vertical plane. A standard connectedness argument  implies that each connected component of $S$ is a vertical plane.
\end{proof}

\bibliographystyle{abbrv} 
\bibliography{sub-finsler}

\end{document}